\newtheorem{thm}{Theorem}[section]
\newtheorem{cor}[thm]{Corollary}
\newtheorem{lem}[thm]{Lemma}
\newtheorem{prop}[thm]{Proposition}
\theoremstyle{definition}
\newtheorem{defn}[thm]{Definition}
\theoremstyle{example}
\newtheorem{exam}[thm]{Example}
\theoremstyle{remark}
\numberwithin{equation}{section}
\theoremstyle{definition}
\theoremstyle{remark}
\numberwithin{equation}{section}
\newcommand{\h}{\mathcal{H}}
\newcommand{\NN}{\mathbb N}
\newcommand{\RR}{\mathbb R}
\newcommand{\CC}{\mathbb C}
\newcommand{\SSS}{\mathbb S}
\begin{document}

\title{Redundancy of Fusion frames in Hilbert Spaces}

\author[ A. Rahimi, G. Zandi and B. Daraby]{ A. Rahimi, G. Zandi and B. Daraby}
\address{ Department of Mathematics, University of Maragheh, P. O. Box 55181-83111, Maragheh, Iran.}

\email{rahimi@maragheh.ac.ir}
\address{ Department of Mathematics, University of Maragheh, P. O. Box 55181-83111, Maragheh, Iran.}
\email{zgolaleh@yahoo.com}
\address{ Department of Mathematics, University of Maragheh, P. O. Box 55181-83111, Maragheh, Iran.}
\email{bdaraby@maragheh.ac.ir}

\subjclass[2000]{Primary 42C40; Secondary 41A58, 47A58.}
\keywords{Fusion Frame, Redundancy Function, Upper Redundancy, Lower Redundancy, Erasure.}
\begin{abstract}

Upon improving and extending the concept of redundancy of frames,
 we introduce the notion of redundancy of
fusion frames, which is concerned with the properties of lower and upper redundancies.   These properties are achieved by
considering the  minimum and maximum values of the redundancy function which is defined from the unit sphere of the Hilbert space into the positive real numbers.
 In
addition, we study the relationship between redundancy of frames (fusion frames) and dual frames (dual fusion frames). Moreover, we indicate some results about excess of fusion frames. We state the relationship between redundancy of local frames and fusion frames in a particular case. Furthermore, some examples are also given.\end{abstract}

\maketitle

\section{Introduction}

Frames for Hilbert spaces have been introduced in $1952$ by Duffin and Schaeffer in their fundamental paper \cite{duffin}
and have been studied in the last two decades
as a powerful framework for robust and stable representation
of signals by introducing redundancy. The customary definition of redundancy was improved by Bodmann, Casazza and Kutyniok in \cite{2} by providing a quantitative measure, which coined upper and lower redundancies.

Redundancy is applied in areas such as: filter bank theory \cite{3'} by Bolcskei, Hlawatsch and Feichtinger, sigma-delta quantization \cite{1'} by Benedetto, Powell and Yilmaz,
signal and image processing \cite{5'} by Cand\`{e}s and Donoho and wireless communications \cite{121} by Heath and Paulraj.
However, many of  the applications can not be modeled by one single frame system.
They require
   distributed processing such as sensor networks \cite{12'}. To handle some emerging applications of frames, new methods developed.
   One starting point was to first build frames ``locally'' and then piece
   them together to obtain frames for the whole space.
  So we can first construct frames  or choose already known frames for smaller spaces,
   and in the second step one would construct a frame for the whole space from them.
   Another construction uses subspaces which are quasi-orthogonal to construct local frames
   and piece them together to get global frames\cite{for}. An elegant approach was introduced in
   \cite{6} that formulates a general method for piecing together local frames to get global frames. This powerful construction was
    introduced by  Casazza and Kutyniok in \cite{6}, named frames of subspaces which
   thereafter they agree on a terminology of fusion frames. This
 notion provides a useful framework
in modeling sensor networks \cite{9'}.

Fusion frames can be regarded as a generalization of conventional frame theory.
It turns out that the fusion frame theory is in fact more
 delicate due to complicated
relationships between the structure of the sequence of weighted
subspaces and the local frames in the subspaces and due to
sensitivity with respect to change of the weights. Redundancy is a crucial
property of a fusion frame as well as of a frame. In the situation
of frames, the rather crude measure of the number of frame vectors
divided by the dimension (in the finite dimensional case) is defined
as the redundancy  which is the
frame bound in the case of tight frame with normalized vectors. This concept has been
 replaced by a more appropriate measure (see \cite{2}).

In this paper, we will focus on the  study of  redundancy of the fusion frames. Furthermore, we will state the relationship between redundancy of local frames and fusion frames in a special case.\par
 At the first, we will review the basic definitions related to the fusion frames.
Throughout this paper, $\h$ is a real or complex Hilbert space and ${\h}^{n}$ is an n-dimensional Hilbert space.

\begin{defn}
Let $\h$ be a  Hilbert space and $I$ be a (finite or infinite) countable index set. Assume that $\{W_i\}_{i\in
I}$ be a sequence of closed subspaces in $\h$ and $\{v_i\}_{i\in I}$ be a family
of weights, i.e., $v_i>0$ for all $i\in I$. We say that the
family $\mathcal{W}=\{(W_i,v_i)\}_{i\in I}$ is a  $fusion$ $frame$ or a $frame$ $of$ $subspaces$ with
respect to $\{v_i\}_{i\in I}$ for $\h$ if there
exist constants $0<A\leq B<\infty$ such that
$$A\|x\|^2\leq\sum_{i\in I}v_i^2\|P_{W_i}(x)\|^2\leq
B\|x\|^2\quad\forall x\in\h,$$ where $P_{W_i}$ denotes the orthogonal
projection onto $W_i,$ for each $i\in I.$ The fusion frame
$\mathcal{W}=\{(W_i,v_i)\}_{i\in I}$ is called  $tight$  if
$A=B$ and $Parseval$ if $A=B=1$.  If all ${v_i}^{,}s$ take the same value $v$, then $\mathcal{W}$ is
called $v$-$uniform$.
Moreover, $\mathcal{W}$ is called an $orthonormal$ $fusion$
 $basis$ for $\h$ if  $\h= \bigoplus_{i\in I} W_i$.
 If $\mathcal{W}=\{(W_i,v_i)\}_{i\in I}$ possesses an
 upper fusion frame bound but not necessarily a lower
 bound, we call it a $Bessel$ $fusion$ $sequence$ with Bessel fusion bound $B$.
 The normalized version of $\mathcal{W}$ is obtained when we choose $v_i=1$ for all $i\in I.$
 Note that we use this term merely when $\{(W_i,1)\}_{i\in I}$ formes a fusion frame for $\h.$
\end{defn}
Without loss of generality,  we may assume that the family
of weights $\{v_i\}_{i\in I}$ belongs to $\ell^{\infty}_+(I) $.\\
\indent As we know, redundancy appears as a mathematical concept and as a methodology
for signal processing. Recently, the ability of redundant systems to provide
sparse representations has been extensively exploited \cite{3}. In fact, frame
 theory is entirely based on the notion of redundancy.

As a first notion of redundancy in the situation of a tight fusion
frame, we can choose its fusion frame bound as a measure which is
equivalent to $$A=\sum_{i=1}^{M}{ v^2_i \dim W_i \over n},$$ where
$\{(W_i,v_i)\}_{i=1}^{M}$  is an A-tight fusion frame for a Hilbert
space $\mathcal{H}^n$  \cite{7}. We will illustrate that this measurement of redundancy is
applied only for tight fusion frames by introducing and analysing two examples of fusion
frames. Let $\{e_i\}_{i=1}^{n}$ be an orthonormal
basis for a Hilbert space $\mathcal{H}^n$ and so a normalized
Parseval frame for $\mathcal{H}^{n}$. Let $W_i=\operatorname {span}\{e_i\}$
and $v_i=$1=$\|e_i\|$, for each
$i=1,...,n$. Then
   $\{(W_i,v_i)\}_{i=1}^{n}$ is a Parseval fusion frame
for $\mathcal{H}^{n}$. Now consider $$\mathcal{W}=
\{W_1,...,W_1,W_2,...,W_n\},$$ where  $W_1$ occurs $n+1$ times
and$$\mathcal{V}=\{W_1,W_1,W_2,W_2,...,W_n,W_n\}.$$  It is obvious
that $\mathcal{W}$ and  $\mathcal{V}$ are fusion frames for
$\mathcal{H}^{n}$ with respect to $v_i=1$, for each $i$. For
$\mathcal{W}$ and $\mathcal{V}$, the above measure of redundancy
coincides and is equal to$$\sum_{i=1}^{2n}{ v^2_i \dim W_i \over {n}
}={2n\over n}=2.$$ However, intuitively the redundancy of
$\mathcal{W}$ seems to be  localized, while the redundancy of
$\mathcal{V}$ seems to be uniform. The fusion frame $\mathcal{V}$
is robust with respect to any one erasure, whereas $\mathcal{W}$
does not have this property. Neither of these facts can be read from
the above redundancy notion, which is not good enough. Ideally,
the upper redundancy of $\mathcal{W}$  should be
$n+1$ and the lower 1, while the upper and lower redundancies
of $\mathcal{V}$ should coincide and equal $2$.
 More generally, if a fusion frame consists of an orthonormal fusion
  basis which is individually repeated several times, then the
   lower  redundancy should be the smallest number of repetitions
    and the upper redundancy the largest.\\
\indent In order to define a better notion of redundancy for fusion frames,
 we will consider a list of properties that our notion is required to satisfy, similar to those in \cite{2} and \cite{cahill}.\\
$\textbf{Outline.}$ The outline is as follows:\\
We will start our consideration by giving a brief review of the definitions and basic
properties of fusion frames   in Section 2.
We will define the redundancy function for finite (infinite) fusion frames and state main results in Section 3.
 The relationships between redundancy of frames (fusion frames) and
dual frames (dual fusion frames) will be investigated in Section 4.
The concept of excess of fusion frames will be reviewed and discussed in Section 5.
 In Section 6, the relationship between redundancy of fusion frames
  and local frames are discussed in a particular case.
 Finally, Section 7 contains some examples.

\section{Fusion Frames}
In this section, we will  review the definitions of the analysis, synthesis  and fusion
 frame operator introduced in \cite{6}. Moreover we will
  state some already proved properties and theorems around fusion frames.\\ \par
 \textbf{Notation:} For any family $\{\h_i\}_{i\in I}$ of Hilbert spaces, we use
 \[(\sum_{i\in I}{\oplus \h_i})_{\ell_2}= \left\{ \{f_{i}\}_{i\in I}: f_i\in\h_i,  \sum_{i\in I}\|f_{i}\|^2<\infty
\right\}\]
with inner product
\[
\langle\{f_i\}_{i\in I},\{g_i\}_{i\in I}\rangle=\sum_{i\in I} \langle f_i, g_i\rangle,\quad \{f_{i}\}_{i\in I}, \{g_{i}\}_{i\in I}\in(\sum_{i\in I}{\oplus \h_i})_{\ell_2}
\]
and
\[
\|\{f_i\}_{i\in I}\|:=\sqrt{\sum_{i\in I} \|f_i\|^2}.
\] It is easy to show that $(\sum_{i\in I}{\oplus \h_i})_{\ell_2}$ is a Hilbert space.

 \begin{defn}
 Let $\mathcal{W}=\{(W_i,v_i)\}_{i\in I}$ be a fusion frame for $\h$.
 The $synthesis$ $operator$ $T_{\mathcal{W}}:(\sum_{i\in I}{\oplus W_i})_{\ell_2} \to \h$ is defined by
  $$  T_{\mathcal{W}}(\{f_i\}_{i\in I})= \sum_{i\in I}{v_if_i}, \quad \{f_i\}_{i\in I}\in (\sum_{i\in I}{\oplus W_i})_{\ell_2}.$$
 In order to map a signal to the representation space, i.e., to analyze it,
  the $analysis$ $operator$ $T_{\mathcal{W}}^{*}$ is employed which is defined by
  $$T_{\mathcal{W}}^{*}: \h \to (\sum_{i\in I}{\oplus W_i})_{\ell_2} \quad with \quad
  T_{\mathcal{W}}^{*}(f)=\{v_i P_{{W_i}}(f)\}_{i\in I},$$ for any $f\in\h$. The $fusion$ $frame$ $operator$ $S_{\mathcal{W}}$ for $\mathcal{W}$
  is defined by
  $$S_{\mathcal{W}}(f)=T_{\mathcal{W}}T_{\mathcal{W}}^{*}(f)=\sum_{i\in I} v^{2}_i P_{W_i}(f),\quad f\in\h.$$
 \end{defn}
 It follows from \cite{6} that for each fusion frame, the operator $S_{\mathcal{W}}$  is invertible,  positive and $AI\leq S_{\mathcal{W}}\leq BI$. Any $f\in\h$ has the representation $f=\sum_{i\in I} v^{2}_i S_{\mathcal{W}}^{-1}P_{W_i}(f)$.\\
\indent Let us state  some definitions  and propositions that we need in this paper.

\begin{prop}\cite{6}\label{111}
 Let $\{ W_i\}_{i\in I}$ be a family of subspaces for $\h$. Then the following conditions are equivalent.
 \begin{enumerate}
\item $\{ W_i\}_{i\in I}$ is an orthonormal fusion basis for $\h$;
 \item $\{W_i\}_{i\in I}$ is a 1-uniform Parseval fusion frame
for $\h$.\\
\end{enumerate}
\end{prop}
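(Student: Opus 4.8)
The plan is to prove the two implications separately, relying in both directions on the Pythagorean relation between a vector and its orthogonal projections onto the $W_i$.

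For $(1)\Rightarrow(2)$, I would start from the assumption $\h=\bigoplus_{i\in I}W_i$. Since the $W_i$ are then mutually orthogonal closed subspaces whose closed span is $\h$, every $x\in\h$ has the orthogonal expansion $x=\sum_{i\in I}P_{W_i}(x)$ with pairwise orthogonal summands, so the generalized Pythagorean theorem yields $\|x\|^2=\sum_{i\in I}\|P_{W_i}(x)\|^2$. Taking $v_i=1$ for every $i$ turns this identity into the fusion frame inequalities with $A=B=1$, which is precisely the statement that $\{W_i\}_{i\in I}$ is a 1-uniform Parseval fusion frame.

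For $(2)\Rightarrow(1)$, I would assume $\sum_{i\in I}\|P_{W_i}(x)\|^2=\|x\|^2$ for all $x\in\h$ and first establish pairwise orthogonality, which is the only step that requires an idea. Fixing $i\neq j$ and an arbitrary $x\in W_i$, one has $P_{W_i}(x)=x$, so the single $i$-th term already equals $\|x\|^2$; since every summand is nonnegative, all the others must vanish, and in particular $P_{W_j}(x)=0$. As $x\in W_i$ was arbitrary this gives $W_i\perp W_j$. Completeness then follows immediately: a vector $x$ orthogonal to every $W_i$ satisfies $P_{W_i}(x)=0$ for all $i$, so $\|x\|^2=\sum_{i\in I}\|P_{W_i}(x)\|^2=0$ and hence $x=0$, proving $\overline{\operatorname{span}\bigcup_{i\in I}W_i}=\h$. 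Orthogonality together with completeness is exactly $\h=\bigoplus_{i\in I}W_i$.

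I expect the main obstacle to be bookkeeping in the infinite-dimensional case rather than any deep difficulty: one must justify the unconditional convergence of $x=\sum_{i\in I}P_{W_i}(x)$ and the Pythagorean identity over an infinite index set, where the Bessel/Parseval structure does the work. The genuinely clever move is the nonnegativity argument used to extract orthogonality in $(2)\Rightarrow(1)$; everything else is routine.
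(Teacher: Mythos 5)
Your proof is correct; note, though, that the paper offers no proof to compare it with --- the proposition is stated with a citation to \cite{6} and never proved internally. Both of your implications are sound: in $(1)\Rightarrow(2)$, the convergence issue you flag is handled by the standard fact that for mutually orthogonal closed subspaces with dense span the finite sums $\sum_{i\in F}P_{W_i}$ are orthogonal projections increasing strongly to the identity, which gives the unconditional expansion $x=\sum_{i\in I}P_{W_i}(x)$ and the Pythagorean identity; in $(2)\Rightarrow(1)$, your nonnegativity argument (for $x\in W_i$ the $i$-th summand alone exhausts $\|x\|^2$, forcing $P_{W_j}(x)=0$ for $j\neq i$) and the trivial completeness step are exactly right. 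Two observations. First, you implicitly, and correctly, read the definition $\h=\bigoplus_{i\in I}W_i$ as an \emph{orthogonal} direct sum; this is essential, because under the weaker reading of a mere direct-sum decomposition (a Riesz decomposition, in the paper's terminology, which the paper treats as a genuinely weaker notion) the implication $(1)\Rightarrow(2)$ is false. Second, an alternative proof can be assembled from a lemma this paper quotes from \cite{6} in Section 6: $\{(W_i,1)\}_{i\in I}$ is a Parseval fusion frame if and only if the union of orthonormal bases $\{e_{ij}\}_{j\in J_i}$ of the subspaces $W_i$ is a Parseval frame for $\h$; since these vectors have unit norm, that union is a Parseval frame if and only if it is an orthonormal basis of $\h$, which in turn holds if and only if the $W_i$ are mutually orthogonal with dense span. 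Your direct argument is more elementary and self-contained, while the local-frame route exhibits the statement as a special case of the general fusion-frame/local-frame correspondence.
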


\begin{defn}\cite{6}
We call a fusion frame $\mathcal{W}=\{(W_i, v_i)\}_{i\in I}$ for $\mathcal{H}$ a
\emph{Riesz decomposition} of $\mathcal{H},$  if  every $f\in
\mathcal{H}$  has a unique representation
$f=\sum_{i\in I} f_i,$ $f_i\in W_i.$
\end{defn}
 \begin{prop}\cite{6}
If $\mathcal{W}=\{(W_i, v_i)\}_{i\in I}$ is an orthonormal fusion basis for $\mathcal{\h}$, then it is also
 a Riesz decomposition of $\mathcal{\h}$.\\
 \end{prop}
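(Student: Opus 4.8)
The plan is to exploit directly the orthogonal direct-sum structure encoded in the definition $\h = \bigoplus_{i\in I} W_i$. By definition this means the subspaces $\{W_i\}_{i\in I}$ are mutually orthogonal and their closed linear span is all of $\h$; equivalently, by Proposition \ref{111}, $\mathcal{W}$ is a $1$-uniform Parseval fusion frame, so the Parseval identity $\sum_{i\in I}\|P_{W_i}(f)\|^2 = \|f\|^2$ holds for every $f\in\h$. I would use this identity as the engine of the existence half and the mutual orthogonality as the engine of the uniqueness half.

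For existence, I would take an arbitrary $f\in\h$ and set $f_i := P_{W_i}(f)\in W_i$. The Parseval identity guarantees $\sum_{i\in I}\|f_i\|^2 = \|f\|^2 < \infty$, so the series $\sum_{i\in I} f_i$ converges in $\h$; mutual orthogonality of the $W_i$ together with the fact that their closed span exhausts $\h$ then yields $f = \sum_{i\in I} f_i$. This exhibits the required decomposition.

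For uniqueness, suppose $f = \sum_{i\in I} f_i = \sum_{i\in I} g_i$ with $f_i, g_i\in W_i$. Subtracting gives $\sum_{i\in I}(f_i - g_i) = 0$. I would apply the bounded orthogonal projection $P_{W_j}$ to both sides; since $W_i\perp W_j$ for $i\neq j$ we have $P_{W_j}(f_i - g_i) = 0$ for all $i\neq j$, while $P_{W_j}(f_j - g_j) = f_j - g_j$. Hence $f_j - g_j = 0$ for every $j$, establishing uniqueness.

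The main obstacle will be the careful justification of the two limiting steps in the infinite-index case: interchanging the continuous projection $P_{W_j}$ with the infinite sum (which follows from boundedness of $P_{W_j}$ and norm convergence of the series) and confirming norm convergence of $\sum_{i\in I} f_i$ in the first place, for which the Parseval identity supplied by Proposition \ref{111} is the crucial ingredient. When $I$ is finite these convergence concerns disappear entirely and the argument reduces to elementary linear algebra on the orthogonal decomposition.
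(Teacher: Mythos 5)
Your proof is correct and complete. Note that the paper itself gives no argument for this proposition—it is quoted from the Casazza--Kutyniok frames-of-subspaces paper—so there is no internal proof to compare against; yours is the standard argument, and both halves (convergence of $\sum_{i\in I}P_{W_i}(f)$ from the Parseval identity plus orthogonality, and uniqueness via termwise application of the bounded projections $P_{W_j}$) are sound. One small streamlining: since the normalized family is a $1$-uniform Parseval fusion frame, its fusion frame operator equals the identity, so $f=\sum_{i\in I}P_{W_i}(f)$ follows at once, without separately invoking density of the span to identify the limit of the series.
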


 \begin{prop}\label{8}\cite{8}
 Let $\mathcal{W}=\{(W_i, v_i)\}_{i\in I}$ be a fusion frame with bounds $A$ and $B$ and let $J\subset I$.
 Then $\{W_i\}_{i\in I\backslash J}$ is a fusion frame with bounds $A-a$ and $B$ if $a=\sum_{i\in J}{v^{2}_i}<A$.\\
 \end{prop}

\begin{prop}\cite{7}\label{aaa}
Let $\mathcal{W}=\{(W_i, v_i)\}_{i\in I}$ be a fusion frame with
bounds $A$ and $B$. If $U$ is an invertible operator on $\mathcal{H}$, then $\{(UW_i,
v_i)\}_{i\in I}$ is a fusion frame for $\mathcal{H}$ with fusion
frame bounds $${A\over {\|U\|^{2}\|U^{-1}\|^{2}}} \quad and \quad
B\|U\|^{2}\|U^{-1}\|^{2}.$$
 \end{prop}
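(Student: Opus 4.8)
The plan is to reduce everything to the defining fusion frame inequality for $\mathcal{W}$ by establishing a single pointwise estimate that compares $\|P_{UW_i}(x)\|$ with $\|P_{W_i}(U^{*}x)\|$. The first thing to check is that each $UW_i$ is a closed subspace, so that the orthogonal projection $P_{UW_i}$ is even well defined; this is immediate, since an invertible bounded operator is a homeomorphism of $\h$ and hence carries closed subspaces to closed subspaces.

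The heart of the argument is the variational description of an orthogonal projection: for any closed subspace $V$ one has $\|P_V(x)\|=\sup\{\,|\langle x,y\rangle| : y\in V,\ \|y\|\le 1\,\}$. Applying this to $V=UW_i$, writing $y=Uw$ with $w\in W_i$, and using $\langle x,Uw\rangle=\langle U^{*}x,w\rangle$, I would obtain $\|P_{UW_i}(x)\|=\sup_{0\ne w\in W_i}|\langle U^{*}x,w\rangle|/\|Uw\|$. The elementary two-sided bound $\|w\|/\|U^{-1}\|\le\|Uw\|\le\|U\|\,\|w\|$ then converts this into a comparison with the plain projection norm:
\[
\frac{1}{\|U\|}\,\|P_{W_i}(U^{*}x)\|\ \le\ \|P_{UW_i}(x)\|\ \le\ \|U^{-1}\|\,\|P_{W_i}(U^{*}x)\|.
\]

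With this key estimate in hand, the rest is bookkeeping. Squaring, weighting by $v_i^{2}$ and summing, the upper bound follows from the Bessel (upper) inequality for $\mathcal{W}$ together with $\|U^{*}\|=\|U\|$:
\[
\sum_{i\in I}v_i^{2}\|P_{UW_i}(x)\|^{2}\ \le\ \|U^{-1}\|^{2}\sum_{i\in I}v_i^{2}\|P_{W_i}(U^{*}x)\|^{2}\ \le\ B\,\|U\|^{2}\|U^{-1}\|^{2}\,\|x\|^{2}.
\]
For the lower bound I would use the left-hand estimate together with the frame lower bound and the inequality $\|U^{*}x\|\ge\|x\|/\|U^{-1}\|$, which comes from $\|x\|=\|(U^{*})^{-1}U^{*}x\|\le\|U^{-1}\|\,\|U^{*}x\|$, giving $\sum_i v_i^{2}\|P_{UW_i}(x)\|^{2}\ge \frac{A}{\|U\|^{2}}\|U^{*}x\|^{2}\ge \frac{A}{\|U\|^{2}\|U^{-1}\|^{2}}\|x\|^{2}$. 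These are precisely the claimed bounds.

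The main obstacle is the second step: one must resist the temptation to write $P_{UW_i}=UP_{W_i}U^{-1}$, which fails because $U$ need not be unitary, so $UP_{W_i}U^{-1}$ is only an \emph{oblique} projection onto $UW_i$ and its norm is not directly comparable to $\|P_{W_i}\|$. Routing the comparison through the adjoint $U^{*}$ via the supremum formula is what makes the estimate clean, and keeping track of the appearance of $U^{*}$ (rather than $U$) in the intermediate quantity $\|P_{W_i}(U^{*}x)\|$ is the one place where care is genuinely needed.
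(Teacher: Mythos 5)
Your proof is correct, and there is nothing in the paper to compare it against line by line: the paper imports this proposition verbatim from Casazza--Kutyniok \cite{7} without proof, so the only question is whether your argument stands on its own, and it does. The pointwise two-sided estimate
\[
\frac{1}{\|U\|}\,\|P_{W_i}(U^{*}x)\|\ \le\ \|P_{UW_i}(x)\|\ \le\ \|U^{-1}\|\,\|P_{W_i}(U^{*}x)\|
\]
is exactly the inequality on which the standard literature proof rests, but there it is extracted from the operator identity $P_{W}T^{*}=P_{W}T^{*}P_{\overline{TW}}$ (valid for every bounded $T$ and closed subspace $W$), applied once with $T=U$ and once with $T=U^{-1}$ and the subspace $UW_i$; you obtain the same estimate instead from the variational formula $\|P_{V}(x)\|=\sup\{|\langle x,y\rangle| : y\in V,\ \|y\|\le 1\}$ combined with $\|w\|/\|U^{-1}\|\le\|Uw\|\le\|U\|\,\|w\|$. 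The two routes are equivalent in substance, and the final bookkeeping (feeding $U^{*}x$ into the frame inequality for $\mathcal{W}$, then using $\|U^{*}x\|\le\|U\|\,\|x\|$ and $\|x\|\le\|U^{-1}\|\,\|U^{*}x\|$) is identical; your version has the advantage of being self-contained and elementary, while the lemma-based route produces a reusable operator identity that is handy elsewhere in fusion frame theory. Your closing caution is also well placed: $P_{UW_i}=UP_{W_i}U^{-1}$ holds only for unitary $U$ (this identity is what the paper uses in condition [D4] of Proposition \ref{3.3}, where $U$ \emph{is} unitary), and invoking it here for a general invertible $U$ would be a genuine error.
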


\begin{defn}\cite{6}
A family of subspaces $\{W_i\}_{i\in I}$ of $\h$ is called $minimal$ if for each $i\in I,$
$$W_i\cap \ \overline{\operatorname{span}}_{j\neq i}\{W_j\}_{j\in I}=\{0\}.$$
\end{defn}
\begin{prop}\cite{6}\label{bbb}
Let $\mathcal{W}=\{(W_i,v_i)\}_{i\in I}$ be a fusion frame for $\h.$
Then the following conditions are equivalent.
\begin{enumerate}
\item $\mathcal{W}=\{(W_i,v_i)\}_{i\in I}$ is a Riesz decomposition of $\h$;
\item $\{W_i\}_{i\in I}$ is minimal;
\item the synthesis operator is one to one;
\item the analysis operator is onto.
\end{enumerate}
\end{prop}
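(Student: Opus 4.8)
The plan is to prove the four statements equivalent through the cycle $(1)\Rightarrow(3)\Rightarrow(4)\Rightarrow(2)\Rightarrow(1)$, which lets me reach the delicate condition $(2)$ (minimality) only via the analysis operator, and so avoid arguing directly about the closure that appears in its definition. Throughout I would use the standing facts recorded after Definition 2.2: the synthesis operator $T=T_{\mathcal W}$ is bounded (upper fusion bound), the fusion frame operator $S=S_{\mathcal W}=TT^{*}$ is invertible, and hence $T$ is surjective. I would also note once that \emph{existence} of a subspace representation is automatic, since for every $f\in\h$
\[
f=SS^{-1}f=\sum_{i\in I}v_i^{2}\,P_{W_i}(S^{-1}f),
\]
where each summand $v_i^{2}P_{W_i}(S^{-1}f)$ lies in $W_i$, and the sequence of summands lies in $(\sum_{i\in I}\oplus W_i)_{\ell_2}$ because $\{v_i\}\in\ell^{\infty}_{+}(I)$. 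Thus in $(1)$ only the \emph{uniqueness} of the representation is at issue.

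For $(1)\Rightarrow(3)$, since $v_i>0$ the synthesis operator encodes the representation map exactly: setting $g_i=v_if_i\in W_i$, a kernel element $\{f_i\}$ of $T$ is the same datum as a representation $0=\sum_i g_i$ with $g_i\in W_i$, so uniqueness of the representation of $0$ forces all $g_i=0$, whence $\ker T=\{0\}$. For $(2)\Rightarrow(1)$, suppose $\sum_i g_i=0$ with $g_i\in W_i$ and $\{g_i\}\in(\sum_{i\in I}\oplus W_i)_{\ell_2}$, and fix $i_0$. The partial sums of $\sum_{j\neq i_0}g_j$ converge in $\h$ to $-g_{i_0}$ and each lies in $\operatorname{span}_{j\neq i_0}\{W_j\}$, so $-g_{i_0}\in\overline{\operatorname{span}}_{j\neq i_0}\{W_j\}$; since also $g_{i_0}\in W_{i_0}$, minimality gives $g_{i_0}=0$. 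As $i_0$ was arbitrary the representation of $0$ is trivial, which together with existence yields uniqueness.

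For $(3)\Rightarrow(4)$, the operator-theoretic core, I would use that $T$ is onto (fusion frame); equivalently its adjoint $T^{*}$ is bounded below and therefore has closed range. If moreover $(3)$ holds, i.e. $T$ is injective, then $\overline{\operatorname{ran}}(T^{*})=(\ker T)^{\perp}=(\sum_{i\in I}\oplus W_i)_{\ell_2}$, so $T^{*}$ has dense range; being also closed, $\operatorname{ran}(T^{*})$ is the whole space and $T^{*}$ is onto.

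The step $(4)\Rightarrow(2)$ is the one I expect to be the main obstacle, since it is where the closure in the definition of minimality must be tamed; I would bypass a direct $(1)\Rightarrow(2)$ argument (which collides with exactly that closure and with convergence questions for arbitrary elements of the closed span) by an orthogonality trick through the analysis operator. Fix $i_0$ and let $V=\overline{\operatorname{span}}_{j\neq i_0}\{W_j\}$. For $y\in W_{i_0}$ the finitely supported sequence $\{g_i\}$ with $g_{i_0}=y$ and $g_i=0$ otherwise lies in $(\sum_{i\in I}\oplus W_i)_{\ell_2}$, so by $(4)$ there is $f\in\h$ with $v_iP_{W_i}f=g_i$ for all $i$; hence $P_{W_j}f=0$ for every $j\neq i_0$, giving $f\perp V$, while $v_{i_0}P_{W_{i_0}}f=y$. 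Now if $x\in W_{i_0}\cap V$, apply this with $y=x$ to obtain such an $f\in V^{\perp}$; since $x\in W_{i_0}$,
\[
\|x\|^{2}=\langle x,v_{i_0}P_{W_{i_0}}f\rangle=v_{i_0}\langle P_{W_{i_0}}x,f\rangle=v_{i_0}\langle x,f\rangle=0,
\]
the last equality because $x\in V$ and $f\in V^{\perp}$. Thus $x=0$, so $W_{i_0}\cap V=\{0\}$ for every $i_0$ and $\{W_i\}$ is minimal, closing the cycle.
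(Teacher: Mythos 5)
Your proof cannot be checked against an internal argument, because the paper contains none: Proposition \ref{bbb} is stated with the citation \cite{6} and is quoted without proof from Casazza and Kutyniok's \emph{Frames of subspaces}. Judged on its own merits, your cycle $(1)\Rightarrow(3)\Rightarrow(4)\Rightarrow(2)\Rightarrow(1)$ is correct and complete. The preliminary observation that existence of a subspace representation is automatic, via $f=\sum_{i\in I}v_i^2P_{W_i}(S^{-1}f)$ together with the paper's standing assumption $\{v_i\}_{i\in I}\in\ell^{\infty}_+(I)$, correctly reduces condition $(1)$ to uniqueness of the representation of $0$; the step $(1)\Rightarrow(3)$ is then immediate because $v_i>0$; the step $(3)\Rightarrow(4)$ is the standard closed-range argument (surjectivity of $T$ forces $T^*$ to be bounded below, hence to have closed range, and $\overline{\operatorname{ran}}(T^*)=(\ker T)^{\perp}$); and your $(4)\Rightarrow(2)$ orthogonality trick is exactly the right device for taming the closure in the definition of minimality, which is where a naive direct $(1)\Rightarrow(2)$ argument stalls: surjectivity of the analysis operator produces, for any $x\in W_{i_0}\cap V$ with $V=\overline{\operatorname{span}}_{j\neq i_0}\{W_j\}$, a vector $f\perp V$ with $v_{i_0}P_{W_{i_0}}f=x$, and self-adjointness of $P_{W_{i_0}}$ then gives $\|x\|^2=v_{i_0}\langle x,f\rangle=0$. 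Two minor simplifications are available. First, in $(2)\Rightarrow(1)$ the hypothesis $\{g_i\}\in(\sum_{i\in I}\oplus W_i)_{\ell_2}$ is never used, since your argument needs only convergence of the series $\sum_{i}g_i$; dropping it makes the uniqueness you establish match the paper's definition of a Riesz decomposition verbatim, as that definition imposes no $\ell_2$ restriction on representations. Second, in $(3)\Rightarrow(4)$ you do not need the open mapping theorem: the lower frame inequality gives $\|T^*f\|^2=\langle S_{\mathcal{W}}f,f\rangle\geq A\|f\|^2$ directly, so $T^*$ is bounded below from the definition of a fusion frame.
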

The following lemma is easy to prove.

\begin{lem}\label{2.9}
Let $I$ and $J$ be countable index sets and  $\mathcal{W}=\{(W_i,w_i)\}_{i\in I}$ and
  $\mathcal{V}=\{({V_j},{v_j})\}_{j\in J}$ be two fusion frames for $\h$.
  Then $\{(W_i,w_i)\}_{i\in I}\cup \{({W_j},{v_j})\}_{j\in J}$ is also a fusion frame for
   $\h.$
\end{lem}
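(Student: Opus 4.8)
The plan is to add the two fusion frame inequalities term by term. First I would denote by $A_1\le B_1$ the fusion frame bounds of $\mathcal{W}$ and by $A_2\le B_2$ those of $\mathcal{V}$, so that for every $x\in\h$ we simultaneously have
$$A_1\|x\|^2\le\sum_{i\in I}w_i^2\|P_{W_i}(x)\|^2\le B_1\|x\|^2 \qquad\text{and}\qquad A_2\|x\|^2\le\sum_{j\in J}v_j^2\|P_{V_j}(x)\|^2\le B_2\|x\|^2.$$

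Next I would make the two index sets disjoint by relabeling via the disjoint union $I\sqcup J$, which is again a countable set because $I$ and $J$ are countable. After this relabeling the union $\{(W_i,w_i)\}_{i\in I}\cup\{(V_j,v_j)\}_{j\in J}$ is a genuine indexed family of weighted closed subspaces of $\h$ whose weights are all strictly positive, so it is a legitimate candidate for a fusion frame in the sense of the defining inequality.

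Then I would simply add the two chains of inequalities above, obtaining
$$(A_1+A_2)\|x\|^2\le\sum_{i\in I}w_i^2\|P_{W_i}(x)\|^2+\sum_{j\in J}v_j^2\|P_{V_j}(x)\|^2\le(B_1+B_2)\|x\|^2,\qquad\forall x\in\h.$$
The middle expression is precisely the fusion frame functional of the union, and both $A_1+A_2$ and $B_1+B_2$ are finite and strictly positive; hence the union is a fusion frame for $\h$ with lower bound $A_1+A_2$ and upper bound $B_1+B_2$.

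There is essentially no obstacle in this argument: the only points that merit a word of care are verifying that the combined index set remains countable, so the definition still applies, and that the weights stay positive, both of which are immediate. If one prefers, the weaker lower bound $A_1$ can be read off directly by discarding the nonnegative $\mathcal{V}$-sum, but summing the two lower bounds is cleaner and symmetric.
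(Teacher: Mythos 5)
Your proof is correct; the paper itself offers no proof of Lemma \ref{2.9} (it is merely labeled ``easy to prove''), and your argument---relabeling over the disjoint union $I\sqcup J$ and adding the two frame inequalities to get bounds $A_1+A_2$ and $B_1+B_2$---is exactly the standard argument the authors evidently had in mind. Your extra care in passing to a disjoint index set (so that coincident weighted subspaces are counted twice rather than merged) is a worthwhile point the paper glosses over, and you also implicitly correct the paper's typo of writing $W_j$ in place of $V_j$ in the stated union.
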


\section{ Redundancy of Fusion Frames and the Main Result}
In this section, we present the definition of redundancy for fusion frames. A quantitative notion of redundancy
of finite frames and infinite frames was introduced  in \cite{2} and \cite{cahill}.
Our approach is similar to these works  for generalizing the concept of frame redundancy to fusion frames.
\subsection{Redundancy of Finite Fusion Frames}
By getting some ideas from the concept of the redundancy of finite frames in Hilbert spaces
 and lower and  upper redundancies \cite{2}, we define the
  redundancy function for finite fusion frames and we introduce and prove some of
  its properties. The redundancy function is defined from the unit sphere $\SSS=\{x\in{\h}^{n}:
\|x\|=1\}$ to the set of positive real numbers $\RR^+$.
\begin{defn}
Let $\mathcal{W}= \{(W_i, v_i)\}_{i=1}^{N}$ be a fusion frame for ${\h}^{n}$
with bounds $A$ and $B.$
For each $x\in \SSS$, the $redundancy$ $function$
$\mathcal{R}_{\mathcal{W}}:\SSS\to \RR^+$ is defined by $$
\mathcal{R}_{\mathcal{W}}(x)=\sum_{i=1}^{N}
\|P_{W_i}(x)\|^2
.$$

\end{defn}
Notice that this notion is reminiscent of the definition of redundancy function for finite frames  \cite{2}, if
$\dim W_i=1$ for $i=1,...,N.$
\begin{defn}
For the fusion frame $\mathcal{W}= \{(W_i, v_i)\}_{i=1}^{N},$ the \emph{upper redundancy}  is defined by
$$\mathcal{R}_{\mathcal{W}}^+=\sup_{x\in\SSS}
\mathcal{R}_{\mathcal{W}}(x), $$ and the \emph{lower redundancy} of
$\mathcal{W}$  by $$\mathcal{R}_{\mathcal{W}}^-=\inf_{x\in \SSS}
\mathcal{R}_{\mathcal{W}}(x). $$ We say that $\mathcal{W}$ has $uniform$
redundancy if $ \mathcal{R}_{\mathcal{W}}^-=
\mathcal{R}_{\mathcal{W}}^+$.
\end{defn}
This notion of redundancy equals the lower and  upper fusion frame bounds
of the normalized version of the fusion frame $\mathcal{W}= \{(W_i, v_i)\}_{i=1}^{N}$, i.e.,
when we take all ${v_i}^{,}s$ equal to 1.
By the definition of the redundancy function  it is obvious that $0<\mathcal{R}_{\mathcal{W}}^-\leq
\mathcal{R}_{\mathcal{W}}^+<\infty.  $ The convergence of the series and boundedness of orthogonal
projection imply the continuity of the redundancy function.\\
By using some linear algebra's concepts and tools, indeed the redundancy function is the \textit{Rayleigh quotient} of the fusion frame operator with respect to the normalized version of the fusion frame $\mathcal{W}= \{(W_i, v_i)\}_{i=1}^{N}.$
Recall from \cite{parlet} that for a Hermitian matrix $M$ and a nonzero vector $x,$ the Rayleigh quotient $R(M,x)$ is defined as follows:

$$R(M,x)={
 {{\langle x,Mx\rangle}\over {{\|x\|}^{2}}}} .$$
 Note that
$$R(M,x)=\langle {x\over\|x\|},{Mx\over\|x\|}\rangle=\langle u,Mu\rangle,\quad u={x\over\|x\|}.$$
So in fact, it is sufficient  to define the Rayleigh quotient on unit vectors. Hence by getting $M=S_{1\mathcal{W}}$
and $x\in \SSS$ we have 
$$R(S_{1\mathcal{W}},x)={\langle x,S_{1\mathcal{W}}x\rangle}={\langle x,\sum_{i=1}^{N}
P_{W_i}(x)\rangle}=\sum_{i=1}^{N}
\|P_{W_i}(x)\|^2=\mathcal{R}_{\mathcal{W}}(x),$$
where $S_{1\mathcal{W}}$ 
is the frame operator with respect to the normalized version of 
$\mathcal{W}= \{(W_i, v_i)\}_{i=1}^{N}.$\\
The Rayleigh quotient is used in the min-max theorem to get exact values of all eigenvalues. It is also used in eigenvalue
algorithms to obtain an eigenvalue approximation. Specifically, this is the basis for Rayleigh quotient iteration.
So, the applications of Rayleigh quotient satisfy for the redundancy function.

With the previously defined notion of lower and  upper redundancies,
  we can verify the main result of this paper.

\begin{prop}\label{3.3}
 Let $\mathcal{W}=\{(W_i, v_i)\}_{i=1}^{N}$ be a fusion frame for $\mathcal{H}^{n}.$
  Then the following statements hold:\\
 \\$[D1]$ The normalized version of $\mathcal{W}$ is an A-tight fusion frame if and only if

 $\mathcal{R}_{\mathcal{W}}^-= \mathcal{R}_{\mathcal{W}}^+=A$

   $ $
\\$[D2]$ $\mathcal{W}$ is an orthonormal fusion basis for $\mathcal{H}^{n}$ if and only if $
\mathcal{R}_{\mathcal{W}}^-= \mathcal{R}_{\mathcal{W}}^+=1 $ and $v_i=1$ for all $i=1,...,N$ .\\
\\$[D3]$ Additivity: For each orthonormal fusion basis $
\mathcal{E}=\{E_i\}_{i=1}^{N}$, we have $$\mathcal{R}_{\mathcal{W}\cup
\mathcal{E}}^{\pm}= \mathcal{R}_{\mathcal{W}}^{\pm}+1.$$ Moreover,
for each fusion frame $\mathcal{V}$ for ${\h}^{n}$  we have
$$\mathcal{R}_{\mathcal{W}\cup \mathcal{V}}^{-}\geq
\mathcal{R}_{\mathcal{W}}^{-}+\mathcal{R}_{\mathcal{V}}^{-} \quad
and\quad\mathcal{R}_{\mathcal{W}\cup \mathcal{V}}^{+}\leq
\mathcal{R}_{\mathcal{W}}^{+}+\mathcal{R}_{\mathcal{V}}^{+}.
$$ In particular, if $ \mathcal{W}$ and $ \mathcal{V}$ have uniform
redundancies, then $$\mathcal{R}_{\mathcal{W}\cup \mathcal{V}}^{-}=
\mathcal{R}_{\mathcal{W}}^++\mathcal{R}_{\mathcal{V}}^+=\mathcal{R}_{\mathcal{W}\cup
\mathcal{V}}^{+} .$$
\\$[D4]$ Invariance: Redundancy is invariant under application of a
unitary operator $U$ on the subspaces $W_i$ of ${\h}^{n},$  i.e., $$\mathcal{R}_
{\mathcal{W}}^{\pm}=\mathcal{R}_{U(\mathcal{W})}^{\pm},$$
\\and under any permutation $\pi\in S_{\{1,...,N\}},$  i.e.,
$$\mathcal{R}_{\mathcal{W}_{\pi_i}}^{\pm}=\mathcal{R}_{\mathcal{W}}^{\pm}
 \quad for \quad all \quad \pi\in S_{\{1,...,N\}}.$$
\end{prop}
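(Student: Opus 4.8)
The plan is to exploit the pointwise additivity of the redundancy function together with the identification, already established in the discussion preceding the statement, of $\mathcal{R}_{\mathcal{W}}(x)$ with the Rayleigh quotient $\langle x, S_{1\mathcal{W}}x\rangle$ of the fusion frame operator $S_{1\mathcal{W}}=\sum_{i=1}^{N}P_{W_i}$ of the normalized version. Two basic observations drive every part. First, for any two families $\mathcal{W},\mathcal{V}$ of weighted subspaces one has $\mathcal{R}_{\mathcal{W}\cup\mathcal{V}}(x)=\mathcal{R}_{\mathcal{W}}(x)+\mathcal{R}_{\mathcal{V}}(x)$ for every $x\in\SSS$, since the defining sum merely concatenates the two index sets. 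Second, $\mathcal{R}_{\mathcal{W}}^{+}$ and $\mathcal{R}_{\mathcal{W}}^{-}$ are precisely the largest and smallest eigenvalues of the positive operator $S_{1\mathcal{W}}$, by the variational (min--max) characterization of eigenvalues.

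For $[D1]$ I would argue that the normalized version is $A$-tight exactly when $S_{1\mathcal{W}}=AI$, which, via the Rayleigh-quotient identity, is equivalent to $\mathcal{R}_{\mathcal{W}}(x)=A$ for every unit vector $x$, i.e. to $\mathcal{R}_{\mathcal{W}}^{-}=\mathcal{R}_{\mathcal{W}}^{+}=A$. The forward implication is immediate; the converse uses that equality of the extreme eigenvalues of a positive operator forces it to be scalar, after which homogeneity extends the identity $\sum_{i}\|P_{W_i}(x)\|^{2}=A\|x\|^{2}$ from $\SSS$ to all of $\mathcal{H}^{n}$. Property $[D2]$ then follows by combining $[D1]$ with Proposition \ref{111}: an orthonormal fusion basis is exactly a $1$-uniform Parseval fusion frame, hence has $v_i=1$ and, being $1$-tight, satisfies $\mathcal{R}_{\mathcal{W}}^{-}=\mathcal{R}_{\mathcal{W}}^{+}=1$ by $[D1]$; conversely these two conditions recover $1$-uniformity and Parseval-ness. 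Here I must record explicitly that the redundancy function ignores the weights entirely, so the hypothesis $v_i=1$ cannot be dropped.

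For the additivity part $[D3]$ I would first note that an orthonormal fusion basis $\mathcal{E}$ has $\mathcal{R}_{\mathcal{E}}(x)\equiv 1$ on $\SSS$ (by $[D2]$ or Proposition \ref{111}), so $\mathcal{R}_{\mathcal{W}\cup\mathcal{E}}(x)=\mathcal{R}_{\mathcal{W}}(x)+1$ pointwise, and taking $\sup$ and $\inf$ over $\SSS$ shifts both extremes by $1$. For a general fusion frame $\mathcal{V}$ I would invoke the elementary superadditivity of the infimum, $\inf(f+g)\geq \inf f+\inf g$, and subadditivity of the supremum, $\sup(f+g)\leq \sup f+\sup g$, applied to the two redundancy functions. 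When both $\mathcal{W}$ and $\mathcal{V}$ have uniform redundancy, each function is constant, the two inequalities collapse to equalities, and the common value is the stated sum. Property $[D4]$ splits into two routine verifications: permutation invariance is immediate from commutativity of the defining finite sum, while for a unitary $U$ the relation $P_{UW_i}=UP_{W_i}U^{*}$ yields $\mathcal{R}_{U(\mathcal{W})}(x)=\mathcal{R}_{\mathcal{W}}(U^{*}x)$, and since $U^{*}$ maps $\SSS$ onto itself the supremum and infimum are unchanged.

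I do not anticipate a serious obstacle; the content is elementary once the Rayleigh-quotient identification is in hand. The only step demanding genuine care is the converse of $[D1]$, where one must pass from a statement on the unit sphere (equality of the minimum and maximum of the Rayleigh quotient) to the operator identity $S_{1\mathcal{W}}=AI$ and thence to $A$-tightness on all of $\mathcal{H}^{n}$; this is where the positivity of $S_{1\mathcal{W}}$ and the scaling $\mathcal{R}_{\mathcal{W}}(x)=\langle x,S_{1\mathcal{W}}x\rangle$ must be used explicitly rather than glossed over.
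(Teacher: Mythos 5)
Your proposal is correct and follows essentially the same route as the paper: pointwise additivity of the redundancy function, Proposition \ref{111} for $[D2]$, superadditivity/subadditivity of $\inf$/$\sup$ for $[D3]$, and the identity $P_{UW_i}=UP_{W_i}U^{*}$ for $[D4]$. Your phrasing of $[D1]$ via the operator identity $S_{1\mathcal{W}}=AI$ and extreme eigenvalues is just a mild repackaging of the paper's frame-bound argument, using the Rayleigh-quotient identification the paper itself sets up before the proposition.
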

\begin{proof}
\textbf[D1] Assume that $\mathcal{W}=\{(W_i, v_i)\}_{i=1}^{N}$ is a fusion frame with uniform redundancy
 $\mathcal{R}_{\mathcal{W}}^-= \mathcal{R}_{\mathcal{W}}^+=A.$
Let $\mathcal{W}'=\{(W_i, 1)\}_{i=1}^{N}$ be the normalized version of $\mathcal{W}$
with bounds $C$ and $D.$ Then, for each $x\in {\h}^{n}$ we have  $$C\|x\|^2\leq\sum_{i=1}^{N}\|P_{W_i}(x)\|^2\;\leq
D\|x\|^2.$$ Now, let  $x\in \SSS,$
so
  $C\leq\mathcal{R}_{\mathcal{W}}(x)\leq
D.$
By the hypothesis, $\mathcal{R}_{\mathcal{W}}^-= \mathcal{R}_{\mathcal{W}}^+=A$ therefore, $C=D=A.$
Consequently, the normalized version of $\mathcal{W}$ is A-tight.\\
The reverse implication is obvious.\\
\\\textbf[D2] This follows from Proposition \ref{111} and condition [D1] above.\\
\\\textbf[D3]
By Lemma \ref{2.9}, the union of each two fusion frames is also a fusion frame.
 Hence $\mathcal{R}_{\mathcal{W}\cup \mathcal{E}}$ is well-defined.

The first claim follows from the definition of
$\mathcal{R}_{\mathcal{W}}$ and Proposition \ref{111}, as follows
$$\mathcal{R}_{\mathcal{W}\cup
\mathcal{E}}(x)=\sum_{i=1}^{N} \|P_{W_i}(x)\|^2+\sum_{i=1}^{N}\|P_{E_i}(x)\|^2=\mathcal{R}_{\mathcal{W}}(x)+1$$ which implies that
$$\mathcal{R}_{\mathcal{W}\cup
\mathcal{E}}^{\pm}= \mathcal{R}_{\mathcal{W}}^{\pm}+1.$$
\\Next, let $\mathcal{W}=\{(W_i, w_i)\}_{i=1}^{N}$ and $\mathcal{V}=\{(V_j, v_j)\}_{j=1}^{M}$ be fusion frames for $\mathcal{H}^{n}$.
 Then for each $x\in \SSS$, we have
$$\mathcal{R}_{\mathcal{W}\cup
\mathcal{V}}(x)=\sum_{i=1}^{N} \|P_{W_i}(x)\|^2+\sum_{j=1}^{M}
\|P_{{{V}}{_j}}(x)\|^2,$$
hence
$$\mathcal{R}_{\mathcal{W}\cup \mathcal{V}}^{-}=\min_{x\in \SSS}\mathcal
{R}_{\mathcal{W}\cup \mathcal{V}}\geq \min_{x\in \SSS}
\mathcal{R}_{\mathcal{W}}(x)+\min_{x\in \SSS}
\mathcal{R}_{\mathcal{V}}(x)=\mathcal{R}_{\mathcal{W}}^-+\mathcal{R}
_{\mathcal{V}}^-,$$ and
$$\mathcal{R}_{\mathcal{W}\cup \mathcal{V}}^{+}=\sup_{x\in \SSS}\mathcal{R}_
{\mathcal{W}\cup \mathcal{V}}\leq \sup_{x\in \SSS}
\mathcal{R}_{\mathcal{W}}(x)+\sup_{x\in \SSS}\mathcal{R}_{\mathcal{V}}(x)
=\mathcal{R}_{\mathcal{W}}^++\mathcal{R}_
{\mathcal{V}}^+.$$
For the particular case, we have
$$\mathcal{R}_{\mathcal{W}\cup \mathcal{V}}^{-}
\geq\mathcal{R}_{\mathcal{W}}^-+\mathcal{R}_{\mathcal{V}}^-
=\mathcal{R}_{\mathcal{W}}^++\mathcal{R}_{\mathcal{V}}^+\geq
\mathcal{R}_{\mathcal{W}\cup \mathcal{V}}^{+},$$
and since in general
$$\mathcal{R}_{\mathcal{W}\cup \mathcal{V}}^{-}\leq
\mathcal{R}_{\mathcal{W}\cup \mathcal{V}}^{+},$$ therefore
$$\mathcal{R}_{\mathcal{W}\cup \mathcal{V}}^{-}=\mathcal{R}_{\mathcal{W}\cup
\mathcal{V}}^{+}.$$
\\\textbf[D4] Let $U$ be a unitary operator on $\mathcal{H}^{n}$.
 Proposition \ref{aaa} implies that  $U\mathcal{W}=\{(UW_i,v_i)\}_{i=1}^{N}$ is a fusion
 frame for $\mathcal{H}^{n}.$ Let $x\in\SSS.$
 The redundancy function for $U\mathcal{W}$ is as follows

\begin{eqnarray*}
\mathcal{R}_{U\mathcal{W}}(x)&=&\sum_{i=1}^{N}
\|P_{UW_i}(x)\|^2\\
 &=&\sum_{i=1}^{N}
\|UP_{W_i}{U}^{-1}(x)\|^2\\
& =&\sum_{i=1}^{N}
\|P_{W_i}({U}^{*}x)\|^2\\
&=&\sum_{i=1}^{N}
\|P_{W_i}(x')\|^2\\
&=&\mathcal{R}_\mathcal{W}(x'),
\end{eqnarray*}
where the above equalities follow from the fact that $P_{UW_i}=UP_{W_i}{U}^{-1}$ and $U$ is unitary.
  Hence, redundancy is invariant under application of a
unitary operator $U$ on the subspaces $\{W_i\}_{i=1}^{N}$  of ${\h}^{n}$.
 Invariance under the permutations of the subspaces $\{W_i\}_{i=1}^{N}$ is clear.
\end{proof}

In the case of ordinary frames, the redundancy of a Riesz basis is
exactly equal to one \cite{1}. In the following corollary we show that  this
fact holds also for a Riesz decomposition of
${\h}^{n}.$
\begin{cor}
Let $\mathcal{W}=\{(W_i,v_i)\}_{i=1}^{N}$ be a Riesz decomposition of
${\h}^{n}.$
Then the redundancy of $\mathcal{W}$ is equal to 1.
\end{cor}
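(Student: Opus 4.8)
The plan is to prove the sharper uniform statement $\mathcal{R}_{\mathcal{W}}^{-}=\mathcal{R}_{\mathcal{W}}^{+}=1$, from which ``the redundancy equals $1$'' follows at once. First I would unpack the hypothesis through Proposition \ref{bbb}: a Riesz decomposition is precisely a \emph{minimal} family whose synthesis operator is injective and whose analysis operator is onto, and, crucially, every $x\in{\h}^{n}$ has a \emph{unique} expansion $x=\sum_{i=1}^{N}x_i$ with $x_i\in W_i$. Counting dimensions in this algebraic direct sum $\h=\bigoplus_{i=1}^{N}W_i$ records the auxiliary identity $\sum_{i=1}^{N}\dim W_i=n$, which I would keep in reserve for the final step.

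Next I would recast the target in operator form. By the Rayleigh-quotient computation made just before Proposition \ref{3.3}, one has $\mathcal{R}_{\mathcal{W}}(x)=\langle x,S_{1\mathcal{W}}x\rangle$, where $S_{1\mathcal{W}}=\sum_{i=1}^{N}P_{W_i}$ is the frame operator of the normalized version $\{(W_i,1)\}_{i=1}^{N}$. Hence $\mathcal{R}_{\mathcal{W}}^{-}$ and $\mathcal{R}_{\mathcal{W}}^{+}$ are the smallest and largest eigenvalues of the positive operator $S_{1\mathcal{W}}$, so the claim $\mathcal{R}_{\mathcal{W}}^{\pm}=1$ is equivalent to the operator identity $S_{1\mathcal{W}}=I$, i.e. to the normalized version being a $1$-tight (Parseval) fusion frame. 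By $[D1]$ with $A=1$ — equivalently by Proposition \ref{111} together with $[D2]$ — this is the same as saying $\{W_i\}_{i=1}^{N}$ is an orthonormal fusion basis. Everything therefore reduces to establishing, for each unit vector $x$, the Pythagorean identity $\sum_{i=1}^{N}\|P_{W_i}(x)\|^{2}=\|x\|^{2}$.

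The last step is to extract this identity from the unique decomposition $x=\sum_i x_i$. Aiming for $\|x\|^{2}=\sum_i\|x_i\|^{2}=\sum_i\|P_{W_i}(x)\|^{2}=\mathcal{R}_{\mathcal{W}}(x)=1$, one needs two things: that each component $x_i$ coincides with the orthogonal projection $P_{W_i}(x)$, and that distinct components are mutually orthogonal. \textbf{These two facts are the main obstacle.} Uniqueness of the expansion by itself only yields an \emph{algebraic} direct sum, which neither identifies $x_i$ with $P_{W_i}(x)$ nor forces orthogonality of the $W_i$. To close the gap I would feed the trace identity $\operatorname{tr}(S_{1\mathcal{W}})=\sum_i\dim W_i=n$ from Step~1 — which already pins the \emph{average} value of $\mathcal{R}_{\mathcal{W}}$ on $\SSS$ to $1$ — into the positivity $S_{1\mathcal{W}}\ge 0$ and the tightness dichotomy of $[D1]$, with the objective of promoting this averaged equality to the full operator identity $S_{1\mathcal{W}}=I$. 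Securing that promotion is exactly where the argument must concentrate its effort, and it is the step I expect to be hardest to make rigorous.
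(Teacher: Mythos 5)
Your instinct about where the difficulty lies is exactly right, and unfortunately the gap you flag cannot be closed: the promotion from the trace identity to $S_{1\mathcal{W}}=I$ fails, because a positive operator on ${\h}^{n}$ with trace $n$ need not be the identity. Concretely, take $n=2$, let $u,v$ be unit vectors with $\langle u,v\rangle=c\in(0,1)$, and set $W_1=\operatorname{span}\{u\}$, $W_2=\operatorname{span}\{v\}$, $v_1=v_2=1$. Since $\{u,v\}$ is a (non-orthogonal) basis, every $x$ has a unique expansion $x=\alpha u+\beta v$, so $\mathcal{W}=\{(W_i,1)\}_{i=1}^{2}$ is a Riesz decomposition of ${\h}^{2}$; yet $S_{1\mathcal{W}}=P_{W_1}+P_{W_2}$ has eigenvalues $1+c$ and $1-c$ (eigenvectors $u+v$ and $u-v$), so $\mathcal{R}_{\mathcal{W}}^{-}=1-c<1<1+c=\mathcal{R}_{\mathcal{W}}^{+}$, even though $\operatorname{tr}(S_{1\mathcal{W}})=2$ and the spherical average of $\mathcal{R}_{\mathcal{W}}$ is indeed $1$. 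So your final step is not merely hard to make rigorous, it is false --- and with it the statement itself, since by your own correct reduction (via $[D1]$, $[D2]$ and Proposition \ref{111}) the claim $\mathcal{R}_{\mathcal{W}}^{\pm}=1$ is equivalent to $\mathcal{W}$ being an orthonormal fusion basis, which a general Riesz decomposition is not. The corollary holds only under the additional hypothesis that the subspaces $W_i$ are mutually orthogonal; what your trace computation legitimately yields is the weaker sandwich $\mathcal{R}_{\mathcal{W}}^{-}\leq 1\leq \mathcal{R}_{\mathcal{W}}^{+}$.

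For comparison, the paper's own proof breaks at the same underlying point, but hides it differently: from minimality it infers that every $x\in\SSS$ lies in a unique subspace $W_{i_0}$ and that all the other projections of $x$ vanish. Neither claim is justified. Minimality only says $W_i\cap\overline{\operatorname{span}}_{j\neq i}\{W_j\}=\{0\}$; a generic unit vector lies in \emph{none} of the $W_i$ (in the example above, $x=(u+v)/\|u+v\|$ lies in neither $W_1$ nor $W_2$, and $\mathcal{R}_{\mathcal{W}}(x)=1+c$), and the cross terms $\|P_{W_j}(x)\|^2$, $j\neq i_0$, vanish only when the decomposition is orthogonal. So your operator-theoretic reformulation is actually a service here: it isolates precisely the orthogonality assumption ($x_i=P_{W_i}(x)$ plus pairwise orthogonality) that the paper's argument uses silently and illegitimately. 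You should record the two-dimensional example as a counterexample to the statement as written, rather than continue searching for a proof.
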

\begin{proof}
Since $\mathcal{W}=\{(W_i,v_i)\}_{i=1}^{N}$ is a Riesz decomposition of
${\h}^{n}$, then it is minimal, by Proposition \ref{bbb}.
Let $x\in \SSS$ be arbitrary. Because of  the minimality of $\mathcal{W},$ the element $x$ can not be in
two subspaces of $\{W_i\}_{i=1}^{N},$ simultaneously.
Hence, there exists a unique $i_0\in \{1,...,N\}$ such that $x\in W_{i_0},$ from which
it follows that
$$\mathcal{R}_\mathcal{W}(x)=\|P_{W_{i_0}}(x)\|^2=\|x\|^2=1.$$
This claim satisfies for all $x\in \SSS,$ so $\mathcal{R}_\mathcal{W}=1.$

\end{proof}

A crucial question concerns the change of redundancy once an invertible
operator is applied to a fusion frame, which we state it as follows.\\
\begin{cor}\label{3.5}
Let $\mathcal{W}=\{(W_i,v_i)\}_{i=1}^{N}$ be a fusion frame for ${\h}^{n}$.
 For every invertible operator $T$ on ${\h}^{n},$ we have
$$\mathcal{R}_{\mathcal{W}}^{\pm}(k(T))^{-2}\leq \mathcal{R}_{T(\mathcal{W})}^{\pm}
\leq \mathcal{R}_{\mathcal{W}}^{\pm}(k(T))^{2},$$
where $k(T)=\|T\|\|T^{-1}\|$ denotes the condition number of $T$.
\end{cor}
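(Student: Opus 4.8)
The plan is to exploit the observation recorded just after the definition of the upper and lower redundancies: because the redundancy function $\mathcal{R}_{\mathcal{W}}(x)=\sum_{i=1}^{N}\|P_{W_i}(x)\|^2$ ignores the weights entirely, the quantities $\mathcal{R}_{\mathcal{W}}^{-}$ and $\mathcal{R}_{\mathcal{W}}^{+}$ are exactly the \emph{optimal} lower and upper fusion frame bounds of the normalized version $\{(W_i,1)\}_{i=1}^{N}$. The same applies to $T(\mathcal{W})=\{(TW_i,v_i)\}_{i=1}^{N}$: its redundancy bounds $\mathcal{R}_{T(\mathcal{W})}^{\pm}$ are the optimal bounds of $\{(TW_i,1)\}_{i=1}^{N}$. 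Consequently the entire statement reduces to tracking frame bounds through Proposition \ref{aaa}.

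First I would apply Proposition \ref{aaa} with $U=T$ to the normalized frame $\{(W_i,1)\}_{i=1}^{N}$, whose optimal bounds are $\mathcal{R}_{\mathcal{W}}^{-}$ and $\mathcal{R}_{\mathcal{W}}^{+}$. This produces the fusion frame $\{(TW_i,1)\}_{i=1}^{N}$ with valid (but a priori non-optimal) bounds $\mathcal{R}_{\mathcal{W}}^{-}(k(T))^{-2}$ and $\mathcal{R}_{\mathcal{W}}^{+}(k(T))^{2}$, recalling $k(T)=\|T\|\,\|T^{-1}\|$. Since the optimal lower bound is the largest valid lower bound and the optimal upper bound is the smallest valid upper bound, this at once yields
$$\mathcal{R}_{\mathcal{W}}^{-}(k(T))^{-2}\leq \mathcal{R}_{T(\mathcal{W})}^{-}\qquad\text{and}\qquad \mathcal{R}_{T(\mathcal{W})}^{+}\leq \mathcal{R}_{\mathcal{W}}^{+}(k(T))^{2}.$$

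To close the remaining two inequalities I would run the same estimate in reverse, applying Proposition \ref{aaa} with the invertible operator $T^{-1}$ to $T(\mathcal{W})$, using that $T^{-1}(T(\mathcal{W}))=\mathcal{W}$ and that the condition number is inversion-invariant, $k(T^{-1})=\|T^{-1}\|\,\|T\|=k(T)$. This gives $\mathcal{R}_{\mathcal{W}}^{-}\geq \mathcal{R}_{T(\mathcal{W})}^{-}(k(T))^{-2}$ and $\mathcal{R}_{\mathcal{W}}^{+}\leq \mathcal{R}_{T(\mathcal{W})}^{+}(k(T))^{2}$, which rearrange to $\mathcal{R}_{T(\mathcal{W})}^{-}\leq \mathcal{R}_{\mathcal{W}}^{-}(k(T))^{2}$ and $\mathcal{R}_{\mathcal{W}}^{+}(k(T))^{-2}\leq \mathcal{R}_{T(\mathcal{W})}^{+}$. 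Combining the two passes produces both chains $\mathcal{R}_{\mathcal{W}}^{\pm}(k(T))^{-2}\leq \mathcal{R}_{T(\mathcal{W})}^{\pm}\leq \mathcal{R}_{\mathcal{W}}^{\pm}(k(T))^{2}$ simultaneously.

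The hard part will be nothing computational but rather the bookkeeping in the middle step: one must be careful that Proposition \ref{aaa} supplies valid yet possibly suboptimal bounds, so the comparisons with the optimal redundancies $\mathcal{R}^{\pm}$ point in the correct directions (an optimal lower bound lies above every valid lower bound, an optimal upper bound below every valid upper bound). Everything else is a direct substitution, and it is precisely the symmetry of carrying out the estimate once with $T$ and once with $T^{-1}$ that upgrades the one-sided estimates into the full two-sided conclusion.
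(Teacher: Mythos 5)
Your proposal is correct and is essentially the paper's own argument: the paper's entire proof is the one-line remark that the claim ``follows by Proposition~\ref{aaa} and applying the definition of the redundancy function'' to $\{(TW_i,v_i)\}_{i=1}^{N}$, which is precisely the identification you exploit of $\mathcal{R}^{\pm}$ with the optimal fusion frame bounds of the normalized versions. Your write-up additionally supplies the bookkeeping the paper leaves implicit --- in particular the second pass with $T^{-1}$ and the invariance $k(T^{-1})=k(T)$, which is genuinely needed to obtain all four inequalities rather than only $\mathcal{R}_{T(\mathcal{W})}^{-}\geq\mathcal{R}_{\mathcal{W}}^{-}(k(T))^{-2}$ and $\mathcal{R}_{T(\mathcal{W})}^{+}\leq\mathcal{R}_{\mathcal{W}}^{+}(k(T))^{2}$.
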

\begin{proof}
The proof follows by  Proposition \ref{aaa} and applying the definition of the redundancy function for the
  fusion frame $\{(TW_i,v_i)\}_{i=1}^{N}$.
\end{proof}
A fusion frame is not uniquely specified by its redundancy function.  Since
 we can apply a unitary operator $U$ to $\{W_i\}_{i=1}^{N}$, yet the
  redundancy  being invariant.
 Let $U$  be a unitary operator on ${\h}^{n}$. By Proposition \ref{aaa},
${U\mathcal{W}}=\{(UW_i,v_i)\}_{i=1}^{N}$ is a fusion frame and the fusion
frame operator $S_{U\mathcal{W}}=US_{\mathcal{W}}U^{-1} $. We denote the
fusion frame operator $S_{U\mathcal{W}}$ by
$\tilde{S}_{\mathcal{W}}$ and denote the fusion frame operator with respect to the
 normalized version of ${U\mathcal{W}}=\{(UW_i,v_i)\}_{i=1}^{N}$
by $\tilde{S_{1}}_{\mathcal{W}}.$
  So for any $x\in \SSS$
  $$\tilde{S_{1}}_{\mathcal{W}}(x)=\sum_{i=1}^{N} P_{UW_i}(x) =
  \sum_{i=1}^{N} UP_{W_i}U^{-1}(x)$$
  therefore
\begin{eqnarray*}
 \langle\tilde{S_{1}}_{\mathcal{W}}(x),x\rangle&=&\langle\sum_{i=1}^{N}
  UP_{W_i}U^{-1}(x),x\rangle=\sum_{i=1}^{N} \|P_{W_i}(U^{*}x)\|^2\\
  &=&\sum_{i=1}^{N} \|P_{W_i}({x}')\|^2=\mathcal{R}_{\mathcal{W}}({x}'),
  \end{eqnarray*}
  where $ U^{*}(x)={x}'\in \SSS$ because   $U$
is unitary.
 Now, we can say that, if $\mathcal{W}$ and ${\mathcal{V}}$ are two fusion frames for ${\h}^{n}$
 with associated frame operators $\tilde{S}_{\mathcal{W}}$ and $\tilde{S}_{\mathcal{V}}$ with
 respect to a unitary operator $U,$ then
 $$\tilde{S_{1}}_{\mathcal{W}}=\tilde{S_{1}}_{\mathcal{V}} \quad  on\quad  {\h}^{n} \quad \Rightarrow \quad  \mathcal{R}_
 {\mathcal{W}}=\mathcal{R}_{\mathcal{V}} \quad  on \quad \SSS. $$
 By the definition of the redundancy function, we have a general statement:
 $$\mathcal{R}_{\mathcal{W}}=\mathcal{R}_{\mathcal{V}}
 \quad  on \quad \SSS \quad \Leftrightarrow \quad {S_{1}}_{\mathcal{W}}={S_{1}}_{\mathcal{V}} \quad on \quad {\h}^{n}, $$
in which ${S_{1}}_{\mathcal{W}}$ and ${S_{1}}_{{\mathcal{V}}}$ are the
 fusion frame operators with respect to the normalized version of the fusion frames $\mathcal{W}$
and ${\mathcal{V}}$, respectively.\\
This argument  leads to  define an equivalence relation:
\begin{defn}
The families of all closed subspaces $\{W_i\}_{i=1}^{N}$ of ${\h}^{n}$ which
 construct a fusion frame with respect to weights $\{v_i\}_{i=1}^{N}$
 is called the $admissible$ subspaces and is denoted  by $\mathcal{FF}.$
 \end{defn}
Putting the above results together, we obtain the next proposition immediately.
\begin{prop}
Let $\mathcal{FF}$ be the family of admissible subspaces
with respect to the weights $\{v_i\}_{i=1}^{N}$. Then the relation $\sim$ on
 $\mathcal{FF}$ defined by
$$\mathcal{W}\sim{\mathcal{V}}\Leftrightarrow \mathcal{R}_{\mathcal{W}}=\mathcal{R}_{\mathcal{V}}$$
is an equivalence relation.
\end{prop}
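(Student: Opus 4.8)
The plan is to verify directly the three defining properties of an equivalence relation — reflexivity, symmetry, and transitivity — by reducing each to the corresponding property of ordinary equality of functions. The essential observation is that $\sim$ is nothing but the pullback of the equality relation on the set of functions $\SSS \to \RR^+$ along the assignment $\mathcal{W} \mapsto \mathcal{R}_{\mathcal{W}}$; since equality is itself an equivalence relation, $\sim$ automatically inherits all three properties. Before anything else, I would record that for each admissible family $\mathcal{W} \in \mathcal{FF}$ the redundancy function $\mathcal{R}_{\mathcal{W}} \colon \SSS \to \RR^+$ is a well-defined object, which is guaranteed by the definition of the redundancy function together with the fact that the members of $\mathcal{FF}$ are exactly those families forming a fusion frame with respect to the fixed weights $\{v_i\}_{i=1}^{N}$.

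First I would establish reflexivity: for any $\mathcal{W} \in \mathcal{FF}$ one trivially has $\mathcal{R}_{\mathcal{W}}(x) = \mathcal{R}_{\mathcal{W}}(x)$ for every $x \in \SSS$, hence $\mathcal{R}_{\mathcal{W}} = \mathcal{R}_{\mathcal{W}}$ and $\mathcal{W} \sim \mathcal{W}$. For symmetry, suppose $\mathcal{W} \sim \mathcal{V}$, i.e. $\mathcal{R}_{\mathcal{W}}(x) = \mathcal{R}_{\mathcal{V}}(x)$ for all $x \in \SSS$; then $\mathcal{R}_{\mathcal{V}}(x) = \mathcal{R}_{\mathcal{W}}(x)$ for all such $x$, so $\mathcal{V} \sim \mathcal{W}$. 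Finally, for transitivity, if $\mathcal{W} \sim \mathcal{V}$ and $\mathcal{V} \sim \mathcal{U}$, then for every $x \in \SSS$ we have $\mathcal{R}_{\mathcal{W}}(x) = \mathcal{R}_{\mathcal{V}}(x)$ and $\mathcal{R}_{\mathcal{V}}(x) = \mathcal{R}_{\mathcal{U}}(x)$, whence $\mathcal{R}_{\mathcal{W}}(x) = \mathcal{R}_{\mathcal{U}}(x)$ by transitivity of equality in $\RR^+$, giving $\mathcal{W} \sim \mathcal{U}$.

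There is no genuine obstacle in this argument: the whole proof is formal, amounting to the transport of the equivalence-relation axioms for equality through the map $\mathcal{W} \mapsto \mathcal{R}_{\mathcal{W}}$. The only point worth a remark is the well-definedness just mentioned, and perhaps a pointer back to the displayed characterization $\mathcal{R}_{\mathcal{W}} = \mathcal{R}_{\mathcal{V}}$ on $\SSS$ if and only if ${S_{1}}_{\mathcal{W}} = {S_{1}}_{\mathcal{V}}$ on ${\h}^{n}$ established immediately above; this shows that the equivalence classes of $\sim$ correspond bijectively to the realizable normalized fusion frame operators, a more illuminating reformulation although it is not needed for the statement itself.
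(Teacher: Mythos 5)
Your proof is correct and coincides with the paper's approach: the paper offers no written proof at all, stating only that the proposition follows ``immediately'' from the preceding discussion, and the implicit justification is exactly your observation that $\sim$ is the pullback of equality of functions $\SSS\to\RR^+$ under $\mathcal{W}\mapsto\mathcal{R}_{\mathcal{W}}$, so reflexivity, symmetry and transitivity are inherited from equality. Your closing remark linking the equivalence classes to the normalized fusion frame operators ${S_1}_{\mathcal{W}}$ also matches the characterization the paper records just before the proposition.
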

Let $U$ be a unitary operator  on ${\h}^{n}$ and $\mathcal{W}\in \mathcal{FF}.$
So, by condition [D4] of Proposition \ref{3.3}, we have $U\mathcal{W}\sim \mathcal{W}.$\\

 In \cite{Dykema}, Dykema at al. stated some projection decompositions for positive operators. Also they
 proved that every positive invertible operator is the frame operator for a spherical frame.
 By getting an idea from these facts, we will characterize the fusion frame
 operator of an equi-dimensional fusion frame, i.e., a fusion frame with $\dim W_i=m$ for $i=1,...,N.$
\begin{prop}
Let $T$ be a positive invertible operator on ${\h}^{n}$
with discrete spectrum having $mk$ $(m,k\in \NN)$ strictly positive eigenvalues, each repeated a multiple of $m$ times.
Then there exists an equi-dimensional  fusion frame $\mathcal{W}=\{(W_i,v_i)\}_{i=1}^{N}$
 such that $T=S_{1\mathcal{W}},$ where $S_{1\mathcal{W}}$  is the fusion frame operator with respect
 to the normalized version of $\mathcal{W}.$
\begin{proof}
Let $T$ be a positive invertible operator with desired
 properties in hypothesis.
By Lemma 7 in \cite{Dykema}, the operator $T$ can be written as the sum of $N$
rank-m projections provided that
$\operatorname{tr} [T]=mN.$ Hence, there exists an m-dimensional fusion
frame $\mathcal{W}=\{(W_i,v_i)\}_{i=1}^{N}$ for ${\h}^{n}$
 having $T$ as its fusion frame operator with respect
  to its normalized version.
Therefore $T=S_{1\mathcal{W}}$ as claimed.
\end{proof}
\end{prop}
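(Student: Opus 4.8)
The plan is to recognize that the whole statement is a projection-decomposition problem in disguise. Indeed, for the normalized version of a fusion frame $\mathcal{W}=\{(W_i,v_i)\}_{i=1}^{N}$ one has $S_{1\mathcal{W}}=\sum_{i=1}^{N}P_{W_i}$, so $S_{1\mathcal{W}}$ is nothing but a sum of orthogonal projections, and the fusion frame is equi-dimensional with $\dim W_i=m$ exactly when each $P_{W_i}$ has rank $m$. Thus producing the desired $\mathcal{W}$ amounts to writing the given operator $T$ as a sum $T=\sum_{i=1}^{N}P_i$ of rank-$m$ orthogonal projections, after which I would set $W_i=\operatorname{ran}(P_i)$ and $v_i=1$ and read off $S_{1\mathcal{W}}=\sum_{i=1}^{N}P_{W_i}=T$.

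First I would pin down the number $N$ of projections. Since a rank-$m$ projection has trace $m$, any such decomposition forces $\operatorname{tr}[T]=\sum_{i=1}^{N}\operatorname{tr}[P_i]=mN$; so $N$ is determined as $N=\operatorname{tr}[T]/m$, and the first thing to check is that this is a positive integer. This is precisely where the spectral hypothesis enters: the assumption that $T$ has strictly positive spectrum with each eigenvalue repeated a multiple of $m$ times (the ``$mk$'' bookkeeping) is arranged so that $\operatorname{tr}[T]$ is an integer multiple of $m$ and so that the multiplicities can be grouped into blocks of size $m$. I would record these consequences explicitly, as they are exactly the input needed for the decomposition lemma.

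Next I would invoke the projection-decomposition result of Dykema et al. (Lemma 7 of \cite{Dykema}): once $\operatorname{tr}[T]=mN$ holds, together with the eigenvalue and multiplicity conditions guaranteed above, $T$ can be written as a sum of $N$ rank-$m$ orthogonal projections $T=\sum_{i=1}^{N}P_i$. Setting $W_i=\operatorname{ran}(P_i)$ and $v_i=1$ gives a family $\mathcal{W}=\{(W_i,1)\}_{i=1}^{N}$ with $\dim W_i=m$ for every $i$. It then remains to confirm that $\mathcal{W}$ really is a fusion frame and not merely a Bessel fusion sequence: since $T$ is positive and invertible on the finite-dimensional space ${\h}^{n}$, its smallest eigenvalue $\lambda_{\min}$ is strictly positive, whence $\lambda_{\min}\|x\|^{2}\le\langle Tx,x\rangle=\sum_{i=1}^{N}\|P_{W_i}(x)\|^{2}\le\lambda_{\max}\|x\|^{2}$, exhibiting $\lambda_{\min}>0$ and $\lambda_{\max}=\|T\|$ as genuine fusion frame bounds. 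This yields $T=S_{1\mathcal{W}}$ as required.

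The main obstacle is the projection decomposition itself, i.e. verifying that the spectral data of $T$ meet the hypotheses of Lemma 7 of \cite{Dykema} and thereby extracting the rank-$m$ projections; the reduction to this lemma and the subsequent verification of the frame bounds are routine, but the integrality of $N=\operatorname{tr}[T]/m$ and the compatibility of the eigenvalue multiplicities with a rank-$m$ (rather than rank-one) decomposition are the delicate points that the hypothesis on the spectrum is designed to secure.
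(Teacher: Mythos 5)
Your overall route coincides with the paper's: both reduce the statement to writing $T$ as a sum of $N$ rank-$m$ orthogonal projections via Lemma 7 of \cite{Dykema}, then take $W_i$ to be the ranges of those projections with $v_i=1$ and read off $T=S_{1\mathcal{W}}=\sum_{i=1}^{N}P_{W_i}$. Your added details (the explicit reduction, and the verification of the fusion frame bounds from $\lambda_{\min}\|x\|^{2}\leq\langle Tx,x\rangle\leq\lambda_{\max}\|x\|^{2}$, which the paper omits) are correct.

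There is, however, a genuine error, located exactly at the step you flag as delicate. You claim that the hypothesis that each eigenvalue of $T$ is repeated a multiple of $m$ times forces $\operatorname{tr}[T]$ to be an integer multiple of $m$, so that $N=\operatorname{tr}[T]/m\in\NN$. This is false: if $\lambda_1,\dots,\lambda_r$ are the distinct eigenvalues with multiplicities $mk_1,\dots,mk_r$, then $\operatorname{tr}[T]=m\sum_{j}k_j\lambda_j$, and since the $\lambda_j$ are arbitrary strictly positive reals, the factor $\sum_j k_j\lambda_j$ need not be an integer. Concretely, $T=\sqrt{2}\,I$ on $\h^{2}$ with $m=2$, $k=1$ satisfies every hypothesis of the proposition (one strictly positive eigenvalue of multiplicity $2$), yet $\operatorname{tr}[T]=2\sqrt{2}$ is not of the form $mN$; in fact $T$ cannot equal $\sum_{i=1}^{N}P_{W_i}$ for \emph{any} equi-dimensional family, since such a sum has integer trace $N\dim W_1$. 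So the integrality of $\operatorname{tr}[T]/m$ is an independent assumption, not a consequence of the multiplicity hypothesis, and your argument cannot be repaired without adding it. To be fair, the paper's own proof does not derive it either: it invokes Lemma 7 ``provided that $\operatorname{tr}[T]=mN$,'' silently treating the trace condition as an extra hypothesis, and without that condition the proposition as stated is false by the example above. The difference is that the paper leaves the condition as an unverified proviso, whereas your write-up asserts, incorrectly, that the spectral hypothesis supplies it.
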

\subsection{Redundancy of Infinite Fusion Frames}
Our approach in redundancy of fusion frames does not capture more information about
infinite fusion frames whose their normalized version is not convergent.
For redundancy of infinite frames, this problem appears for unbounded frames. This problem
with the notion of redundancy of infinite frames was studied in \cite{cahill} by Cahill  at
al. They assumed that $\mathcal{R}_{\Phi}^+< \infty,$
 where $\Phi=\{\varphi_i\}_{i=1}^{\infty}$ is a frame for a Hilbert space $\h.$
 Similarly, we put a restriction  on $\mathcal{R}_{\mathcal{W}}^+$. First we present the definition of redundancy function for an infinite fusion frame.
 \begin{defn}
  Let $I$ be a infinite countable index set and $\h$ be a real or complex Hilbert space.
Assume that $\mathcal{W}=\{(W_i,v_i)\}_{i\in I}$
is a fusion frame for $\h.$
The redundancy function is defined on the unit sphere $\SSS=\{x\in \h: \|x\|=1\}.$ Likewise the finite case
in Section 3.1,
 for each $x\in \SSS$, the redundancy function
$\mathcal{R}_{\mathcal{W}}:\SSS\to \RR^+$ is defined by $$
\mathcal{R}_{\mathcal{W}}(x)=\sum_{i\in I}
\|P_{W_i}(x)\|^2
, \quad x\in \SSS.$$
The lower and upper redundancies defined similar to finite case.
 \end{defn}

The redundancy function may not assume its minimum or maximum on the unit sphere and in general
both the min and max of this function could be infinite.
It  is sufficient that we assume that $\mathcal{R}_{\mathcal{W}}^+<\infty.$ Then all of the
results in Section 3.1 but Proposition 3.8 are satisfy for the fusion frame
$\mathcal{W}=\{(W_i,v_i)\}_{i\in I}.$

\section{ Redundancy of Dual Frames}
Dual frames play an important role in  studying frames and their applications,
 specially in the reconstruction formula. Therefore it is natural to study and consider
  their redundancy and its relationship with redundancy of the original frame. In this
  section, we will show that the ratio between redundancies of frames (fusion frame) and dual frames (dual fusion frames) is
  bounded from below and above by some significant numbers.  First, we will review
  the definition of the redundancy function for finite frames \cite{2}, and the definition of
  dual frames.
\begin{defn}\cite{2}
Let $\Phi=\{\varphi_i\}_{i=1}^{N}$ be a frame for a finite dimensional Hilbert space ${\h}^{n}$.
For each $x\in \SSS$, the redundancy function
$$\mathcal{R}_\Phi:\SSS\to \RR^+$$ is defined by

$$\mathcal{R}_\Phi(x)=\sum_{i=1}^{N}
\|P_{\langle\varphi_i\rangle}(x)\|^2,$$
where $\langle\varphi_i\rangle$ denotes the span of $\varphi_i \in \h$ and
 $P_{\langle\varphi_i\rangle}$ denotes the orthogonal projection onto $\langle\varphi_i\rangle.$
 The \emph{upper redundancy} of $\Phi$ is defined by
$$\mathcal{R}_{\Phi}^+=\sup_{x\in\SSS}
\mathcal{R}_{\Phi}(x), $$ and the \emph{lower redundancy} of
$\Phi$  by $$\mathcal{R}_{\Phi}^-=\inf_{x\in \SSS}
\mathcal{R}_{\Phi}(x). $$ The frame $\Phi$ has $uniform$
redundancy if $ \mathcal{R}_{\Phi}^-=
\mathcal{R}_{\Phi}^+$.
\end{defn}
\begin{defn}
Given a frame $\Phi=\{\varphi_i\}_{i=1}^{N}$ for ${\h}^{n}$, another frame
$\Psi=\{\psi_i\}_{i=1}^{N}$ is said to be a dual frame of $\Phi$ if the following holds:
$$x=\sum_{i=1}^{N}\langle x,\varphi_i\rangle\psi_i \quad for \quad all\quad x\in {\h}^{n}.$$
If we denote by $S_{\Phi}$ the frame operator of $\Phi,$ then the frame $\{{S_{\Phi}}^{-1}\varphi_i\}_{i=1}^{N}$
is called the canonical dual of $\Phi.$
By the classical results in \cite{10}, all duals to a frame $\Phi$
can be expressed as
$$\big \{{S_{\Phi}}^{-1}\varphi_i+\eta_i-\sum_{k=1}^{N}\langle {S_{\Phi}}^{-1}\varphi_i,\varphi_k\rangle\eta_k \big\}_{i=1}^{N}$$
where $\eta_i\in {\h}^{n}$ for $i=1,...,N$ is arbitrary.
Dual frames which do not coincide with the canonical dual frame are often coined alternate dual frame.
\end{defn}
Similar to the Corollary \ref{3.5}, we have the following result for the redundancies of frames and their canonical duals.
\begin{prop}
Let $\Phi=\{\varphi_i\}_{i=1}^{N}$ be a frame for ${\h}^{n}$
 and $\Psi={S_\Phi}^{-1}\Phi$
be the canonical dual of $\Phi$. Then
$$\mathcal{R}_{\Phi}^{\pm}(k(S_{\Phi}))^{-2}\leq \mathcal{R}_{\Psi}^{\pm}
\leq \mathcal{R}_{\Phi}^{\pm}(k(S_{\Phi}))^{2},$$
where $k(S_{\Phi})=\|S_{\Phi}\|\|S_{\Phi}^{-1}\|$
denotes the condition number of the frame operator $S_{\Phi}.$

\end{prop}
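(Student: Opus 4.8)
The plan is to recognize the canonical dual as the image of $\Phi$ under the invertible operator $S_\Phi^{-1}$ and then invoke Corollary \ref{3.5}. First I would identify the frame $\Phi=\{\varphi_i\}_{i=1}^{N}$ with the fusion frame $\mathcal{W}=\{(\langle\varphi_i\rangle,1)\}_{i=1}^{N}$ of one-dimensional subspaces. Comparing the two definitions of the redundancy function (the one for frames in Section 4 and the one for fusion frames in Section 3.1), both reduce to $\sum_i\|P_{\langle\varphi_i\rangle}(x)\|^2$, so $\mathcal{R}_{\Phi}=\mathcal{R}_{\mathcal{W}}$ on $\SSS$ and hence $\mathcal{R}_{\Phi}^{\pm}=\mathcal{R}_{\mathcal{W}}^{\pm}$.

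Next I would observe that since $S_\Phi^{-1}$ is linear and invertible, it carries each one-dimensional subspace $\langle\varphi_i\rangle$ onto the one-dimensional subspace $\langle S_\Phi^{-1}\varphi_i\rangle$; that is, $S_\Phi^{-1}(\langle\varphi_i\rangle)=\langle S_\Phi^{-1}\varphi_i\rangle$. Consequently the canonical dual $\Psi=\{S_\Phi^{-1}\varphi_i\}_{i=1}^{N}$, viewed as a family of spans, is precisely $T(\mathcal{W})$ with $T=S_\Phi^{-1}$, so that $\mathcal{R}_{\Psi}^{\pm}=\mathcal{R}_{T(\mathcal{W})}^{\pm}$.

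With these identifications in place I would apply Corollary \ref{3.5} to the fusion frame $\mathcal{W}$ and the invertible operator $T=S_\Phi^{-1}$, obtaining
$$\mathcal{R}_{\mathcal{W}}^{\pm}(k(T))^{-2}\leq \mathcal{R}_{T(\mathcal{W})}^{\pm}\leq \mathcal{R}_{\mathcal{W}}^{\pm}(k(T))^{2}.$$
The final step is to compute the condition number: since $(S_\Phi^{-1})^{-1}=S_\Phi$, one has $k(T)=\|S_\Phi^{-1}\|\,\|S_\Phi\|=k(S_\Phi)$. Replacing $\mathcal{R}_{\mathcal{W}}^{\pm}$ by $\mathcal{R}_{\Phi}^{\pm}$ and $\mathcal{R}_{T(\mathcal{W})}^{\pm}$ by $\mathcal{R}_{\Psi}^{\pm}$ then yields exactly the claimed inequality.

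The main obstacle — indeed essentially the only point requiring care — is justifying that the frame redundancy function coincides with the fusion-frame redundancy of the associated one-dimensional subspaces, so that Corollary \ref{3.5} (which is stated for fusion frames) may legitimately be transferred to the frame setting. Once that identification is granted, everything reduces to the bookkeeping equalities $S_\Phi^{-1}(\langle\varphi_i\rangle)=\langle S_\Phi^{-1}\varphi_i\rangle$ and $k(S_\Phi^{-1})=k(S_\Phi)$, both of which are immediate.
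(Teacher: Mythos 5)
Your proposal is correct and follows essentially the same route as the paper: the paper's own proof simply notes that the frame redundancy function coincides with the fusion-frame redundancy function when $\dim W_i=1$ for all $i$, and then applies Corollary \ref{3.5}. Your write-up just makes explicit the bookkeeping the paper leaves implicit, namely $S_\Phi^{-1}(\langle\varphi_i\rangle)=\langle S_\Phi^{-1}\varphi_i\rangle$ and $k(S_\Phi^{-1})=k(S_\Phi)$.
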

\begin{proof}
Note that the redundancy function for finite frames
 is equal to the redundancy function for a finite
fusion frame with  $\dim W_i=1$ for all $i=1,...,N.$ For the proof, it is enough to apply the Corollary \ref{3.5}.

\end{proof}
\indent We generalize a result for tight frames.
First, we state a lemma from \cite{10}.
\begin{lem}\label{4.3}\cite{10}
Let $\Phi=\{\varphi_i\}_{i=1}^{N}$ be a frame for ${\h}^{n}$.
Then the following are equivalent.
 \begin{enumerate}
\item $\Phi=\{\varphi_i\}_{i=1}^{N}$ is tight;
\item $\Phi=\{\varphi_i\}_{i=1}^{N}$ has a dual of the
form $\{\psi_i\}_{i=1}^{N}=\{C\varphi_i\}_{i=1}^{N}$
 for some constant $C>0.$
\end{enumerate}

 \end{lem}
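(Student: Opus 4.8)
The plan is to prove the two implications separately; each reduces to a direct manipulation of the frame operator $S_{\Phi}$, defined by $S_{\Phi}(x)=\sum_{i=1}^{N}\langle x,\varphi_i\rangle\varphi_i$. The key observation underlying both directions is the standard fact that $\Phi$ is tight, say with frame bound $A$, if and only if $S_{\Phi}=A\,I$, which follows immediately by comparing the frame inequality $A\|x\|^2\le\langle S_{\Phi}x,x\rangle\le A\|x\|^2$ with the polarization identity for the positive operator $S_{\Phi}$.

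For the implication $(1)\Rightarrow(2)$, I would start from tightness with bound $A>0$, so that $S_{\Phi}=A\,I$ and hence $S_{\Phi}^{-1}=A^{-1}I$. The canonical dual of $\Phi$, namely $\{S_{\Phi}^{-1}\varphi_i\}_{i=1}^{N}$, then takes the form $\{A^{-1}\varphi_i\}_{i=1}^{N}$, so setting $C=1/A>0$ exhibits a dual of the required form $\{C\varphi_i\}_{i=1}^{N}$.

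For the converse $(2)\Rightarrow(1)$, I would assume $\{C\varphi_i\}_{i=1}^{N}$ is a dual of $\Phi$ for some $C>0$ and substitute into the reconstruction formula from the definition of dual frame, obtaining for every $x\in{\h}^{n}$ the chain
$$x=\sum_{i=1}^{N}\langle x,\varphi_i\rangle\,(C\varphi_i)=C\sum_{i=1}^{N}\langle x,\varphi_i\rangle\varphi_i=C\,S_{\Phi}(x).$$
Since this holds for all $x$, it is the operator identity $C\,S_{\Phi}=I$, i.e.\ $S_{\Phi}=C^{-1}I$, so $\Phi$ is tight with frame bound $1/C$. I do not expect a genuine obstacle here, as both directions are short computations; the only point deserving care is the passage from the pointwise reconstruction identity to the operator equality $S_{\Phi}=C^{-1}I$, which is justified precisely because the identity is required to hold for every $x\in{\h}^{n}$.
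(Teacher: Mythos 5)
Your proof is correct. Note that the paper itself states this lemma without proof, simply citing Christensen's book \cite{10}; your argument is the standard one found there: the identification of tightness with the operator identity $S_{\Phi}=A\,I$ (via self-adjointness of $S_{\Phi}$ and polarization), the canonical dual $\{A^{-1}\varphi_i\}_{i=1}^{N}$ for the forward direction, and substitution into the reconstruction formula to get $C\,S_{\Phi}=I$ for the converse. Both directions are complete and carry no gaps.
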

 It is well known that the redundancy is invariant under scaling. So,  using the  above lemma, we have:

\begin{prop}
Let $\Phi$ be a tight frame for ${\h}^{n}$,
and $\Psi={S_\Phi}^{-1}\Phi$ be
 its canonical dual frame. Then, for each $x\in \SSS,$
 $$\mathcal{R}_\Phi(x)=\mathcal{R}_\Psi(x).$$
\end{prop}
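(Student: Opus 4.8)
The plan is to exploit two elementary facts: that for a tight frame the canonical dual vectors are positive scalar multiples of the original frame vectors, and that the redundancy function $\mathcal{R}_\Phi$ sees each $\varphi_i$ only through the one-dimensional subspace $\langle\varphi_i\rangle$ it spans.

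First I would record that $\Phi=\{\varphi_i\}_{i=1}^{N}$ being tight, say with bound $A$, forces its frame operator to satisfy $S_\Phi = A I$, so that $S_\Phi^{-1}=A^{-1}I$ and the canonical dual is $\Psi = S_\Phi^{-1}\Phi = \{A^{-1}\varphi_i\}_{i=1}^{N}$. Thus each dual vector $\psi_i = A^{-1}\varphi_i$ is a strictly positive multiple of $\varphi_i$; this is exactly the shape of dual guaranteed by Lemma \ref{4.3} (with constant $C = A^{-1}$), so the argument is consistent with the remark preceding the statement.

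The key observation is then that multiplying a nonzero vector by a positive constant does not alter the line it spans, so $\langle\psi_i\rangle = \langle A^{-1}\varphi_i\rangle = \langle\varphi_i\rangle$ for every $i$. Hence the orthogonal projections agree, $P_{\langle\psi_i\rangle}=P_{\langle\varphi_i\rangle}$, and substituting into the definition of the redundancy function yields, for every $x\in\SSS$,
$$\mathcal{R}_\Psi(x)=\sum_{i=1}^{N}\|P_{\langle\psi_i\rangle}(x)\|^2=\sum_{i=1}^{N}\|P_{\langle\varphi_i\rangle}(x)\|^2=\mathcal{R}_\Phi(x),$$
which is the desired identity. There is no genuine obstacle in this argument: the whole content is the scale-invariance of the redundancy function, and the only point requiring (immediate) verification is the equality of spans $\langle\psi_i\rangle=\langle\varphi_i\rangle$. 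I would merely take care to note that positivity of the scaling constant is what keeps each $\psi_i$ nonzero, so that $\langle\psi_i\rangle$ is genuinely one-dimensional and the projection $P_{\langle\psi_i\rangle}$ is well defined.
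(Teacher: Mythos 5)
Your proof is correct and follows essentially the same route as the paper: the paper invokes its Lemma 4.3 (a tight frame has a dual of the form $\{C\varphi_i\}_{i=1}^{N}$) together with the scale-invariance of the redundancy function, which is exactly your argument made explicit. Your direct computation $S_\Phi = AI$, hence $\Psi=\{A^{-1}\varphi_i\}_{i=1}^{N}$, is a slightly cleaner way to see that the \emph{canonical} dual itself has this scalar-multiple form, but the substance is identical.
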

For general frames (not necessary tight frames) and their
 alternate duals, we have a relationship between their redundancies
 in a particular case.
 First, we  state a lemma from \cite{5}.
 \begin{lem}\cite{5}
Let $\Phi=\{\varphi_i\}_{i=1}^{N}$ be a frame for ${\h}^{n}$
with the canonical dual
 ${S_\Phi}^{-1}\Phi.$ If $\Psi=\{\psi_i\}_{i=1}^{N}$
 is the alternate dual of $\Phi,$
 then
 $${\|(\langle x,{S_\Phi}^{-1}{\varphi_i}\rangle)_{i=1}^{N}}\|_2\leq
 {\|(\langle x,\psi_i\rangle)_{i=1}^{N}}\|_2.$$
 \end{lem}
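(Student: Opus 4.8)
The plan is to recognize the stated inequality as the classical fact that, among all dual-frame coefficient sequences reconstructing a fixed vector $x$, the canonical dual produces the one of minimal $\ell_2$ norm. I would prove this by an orthogonality (Pythagoras) argument carried out in the coefficient space $\CC^N$, so the whole matter reduces to an orthogonal-decomposition statement about two explicit vectors.

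First I would fix $x\in\h^n$ and abbreviate the two coefficient sequences as
$$a=\left(\langle x,{S_\Phi}^{-1}\varphi_i\rangle\right)_{i=1}^{N},\qquad b=\left(\langle x,\psi_i\rangle\right)_{i=1}^{N}.$$
I introduce the synthesis operator $T_\Phi:\CC^N\to\h^n$, $T_\Phi(c)=\sum_{i=1}^N c_i\varphi_i$, whose adjoint is the analysis operator $T_\Phi^*x=(\langle x,\varphi_i\rangle)_{i=1}^N$, so that $S_\Phi=T_\Phi T_\Phi^*$. Since ${S_\Phi}^{-1}$ is self-adjoint, $a=T_\Phi^*({S_\Phi}^{-1}x)$ lies in $\operatorname{range}T_\Phi^*$, and a direct computation gives $T_\Phi a=T_\Phi T_\Phi^*{S_\Phi}^{-1}x=S_\Phi{S_\Phi}^{-1}x=x$.

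The key step is to show that $b$ is \emph{also} a synthesis coefficient sequence for $x$, that is $T_\Phi b=x$. The definition of alternate dual provides $x=\sum_{i=1}^N\langle x,\varphi_i\rangle\psi_i$ for all $x$, i.e.\ the operator identity $T_\Psi T_\Phi^*=I$; taking adjoints yields $T_\Phi T_\Psi^*=I$, which is exactly the flipped reconstruction $x=\sum_{i=1}^N\langle x,\psi_i\rangle\varphi_i=T_\Phi b$. Passing from the paper's ``synthesis-with-$\Psi$'' definition to this ``synthesis-with-$\Phi$'' form is the one spot requiring care, and I expect it to be the main (if minor) obstacle.

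With $T_\Phi a=x=T_\Phi b$ established, the difference $b-a$ lies in $\ker T_\Phi=(\operatorname{range}T_\Phi^*)^\perp$, while $a\in\operatorname{range}T_\Phi^*$; hence $a$ and $b-a$ are orthogonal in $\CC^N$. Pythagoras then gives
$$\|b\|_2^2=\|a+(b-a)\|_2^2=\|a\|_2^2+\|b-a\|_2^2\ge\|a\|_2^2,$$
which is precisely the asserted inequality $\|a\|_2\le\|b\|_2$, with equality exactly when the two coefficient sequences coincide.
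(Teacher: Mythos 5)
Your proof is correct: the identification $a=T_\Phi^*(S_\Phi^{-1}x)\in\operatorname{range}T_\Phi^*$, the adjoint-flip from the paper's reconstruction $x=\sum_i\langle x,\varphi_i\rangle\psi_i$ (i.e.\ $T_\Psi T_\Phi^*=I$) to $T_\Phi T_\Psi^*=I$ (i.e.\ $T_\Phi b=x$), and the Pythagorean split $\|b\|_2^2=\|a\|_2^2+\|b-a\|_2^2$ using $\ker T_\Phi=(\operatorname{range}T_\Phi^*)^\perp$ are all valid, and you correctly flagged the one step needing care. Note, however, that there is no proof in the paper to compare against: the lemma is quoted verbatim from reference \cite{5} (the Casazza--Kutyniok finite frames volume) and used as a black box. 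What you have reconstructed is the classical minimal-norm property of the canonical dual coefficients, which is essentially the textbook argument behind the cited result; so your write-up is not an alternative route to the paper's proof but a self-contained substitute for a proof the paper omits, and it could be inserted as such without change.
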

 \indent In particular, suppose that ${S_\Phi}^{-1}\Phi$ and $\Psi=\{\psi_i\}_{i=1}^{N}$
 in the  previous lemma be equal norm
 frames, i.e.,  $\|{S_\Phi}^{-1}{\varphi_i}\|=c$ and $\|\psi_i\|=d$ for some $c,d>0$ for
 $i=1,...,N$. Then, for each $x\in \SSS,$

\[
\mathcal{R}_{{S_\Phi}^{-1}\Phi}(x)=\sum_{i=1}^{N}{\|{S_\Phi}^{-1}{\varphi_i}\|}^{-2}{|\langle x,{S_\Phi}^{-1}{\varphi_i}\rangle|}^{2}
=
 {c}^{-2}\sum_{i=1}^{N}{|\langle x,{S_\Phi}^{-1}{\varphi_i}\rangle|}^{2}
\]
and
\[
\mathcal{R}_\Psi(x)=\sum_{i=1}^{N}{\|\psi_i\|}^{-2}{|\langle x,\psi_i\rangle|}^{2}
=
 {d}^{-2}\sum_{i=1}^{N}{|\langle x,\psi_i\rangle|}^{2}
 \]
 so
 $$\mathcal{R}_{{S_\Phi}^{-1}}(x)\leq ({d\over c})^{2}\mathcal{R}_\Psi(x).$$

Now we state similar results for fusion frames. For the fusion frame $\mathcal{W}=\{(W_i,v_i)\}_{i\in I}$
we will assume that $\mathcal{R}_{\mathcal{W}}^+<\infty.$
 First we review the definition of the
$canonical$ $dual$ fusion frames.
\begin{defn}\cite{6}
If $\mathcal{W}=\{(W_i,v_i)\}_{i\in I}$ is a fusion frame for $\h$,
then ${S_\mathcal{W}}^{-1}\mathcal{W}=\{({S_\mathcal{W}}^{-1}W_i,v_i)\}_{i\in I}$
is called the $canonical$ $dual$ fusion frame.
\end{defn}
By Proposition \ref{aaa}, if $\mathcal{W}=\{(W_i,1)\}_{i\in I}$ is
 a fusion frame for $\h$ with bounds $A$ and $B,$ then its
 canonical dual is a fusion frame with bounds
  $${A\over {\|S_\mathcal{W}\|^{2}\|{S_\mathcal{W}}^{-1}\|^{2}}} \quad and \quad
B\|S_\mathcal{W}\|^{2}\|{S_\mathcal{W}}^{-1}\|^{2}.$$\\
Therefore the ratio between the redundancies  of the fusion frame $\mathcal{W}$
and its canonical dual is as follows:
$${{{A}^{3}}\over {B}}\leq {{\mathcal{R}_\mathcal{W}}\over {\mathcal{R}_{{S_\mathcal{W}}^{-1}
\mathcal{W}}}} \leq {{{B}^{3}}\over {A}}.$$
In particular, if $\mathcal{W}=\{(W_i,1)\}_{i\in I}$ is a tight fusion frame,
then $\mathcal{W}$ is equal to its canonical dual  ${S_\mathcal{W}}^{-1}\mathcal{W}$.
Hence for $A$-tight fusion frames, we have
$$\mathcal{R}_\mathcal{W}(x)=\mathcal{R}_{{S_\mathcal{W}}^{-1}\mathcal{W}}(x)=A.$$
The alternate dual of fusion frames were introduced in \cite{11}.
\begin{defn}\cite{11}
Let $\mathcal{W}=\{(W_i,w_i)\}_{i\in I}$ be a fusion frame for $\h$ with the fusion frame
operator $S_\mathcal{W}$. The fusion Bessel sequence ${\mathcal{V}}=\{({V}_i,v_i)\}_{i\in I}$
 is called an $alternate$
$dual$ fusion frame  for $\mathcal{W}$ where
$$x=\sum_{i\in I}v_iw_iP_{{V}_i}{S}^{-1}_\mathcal{W}P_{{W}_i}(x),\quad for\quad all\quad x\in \h.$$
\end{defn}
\begin{prop}\cite{11}
Let $\mathcal{W}=\{(W_i,w_i)\}_{i\in I}$ be a fusion frame for $\h$ with bounds $A,B$
 and ${\mathcal{V}}=\{({V}_i,v_i)\}_{i\in I}$ be the alternate dual of $\mathcal{W}$ with Bessel
  bound $C$. Then ${\mathcal{V}}$ is also a fusion frame with bounds ${1}\over {{B{\|{S}^{-1}_\mathcal{W}\|}^{2}}}$ and $C.$
\end{prop}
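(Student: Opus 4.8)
The plan is to observe that the upper fusion frame bound is handed to us for free and that all the real content lies in extracting the lower bound $\frac{1}{B\|S_{\mathcal{W}}^{-1}\|^2}$ from the alternate-dual reconstruction identity. Indeed, $\mathcal{V}=\{(V_i,v_i)\}_{i\in I}$ is assumed to be a fusion Bessel sequence with bound $C$, which by definition means $\sum_{i\in I}v_i^2\|P_{V_i}(x)\|^2\le C\|x\|^2$ for every $x\in\h$; so the upper inequality requires nothing further, and only a positive lower bound remains to be produced. For this, I would start from the defining formula of the alternate dual, $x=\sum_{i\in I}v_iw_iP_{V_i}S_{\mathcal{W}}^{-1}P_{W_i}(x)$, and pair it with $x$. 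Using that each $P_{V_i}$ is self-adjoint, this yields
$$\|x\|^2=\sum_{i\in I}v_iw_i\langle P_{V_i}S_{\mathcal{W}}^{-1}P_{W_i}(x),x\rangle=\sum_{i\in I}v_iw_i\langle S_{\mathcal{W}}^{-1}P_{W_i}(x),P_{V_i}(x)\rangle.$$

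The estimation of the right-hand side proceeds by two applications of the Cauchy--Schwarz inequality, arranged so that the target quantity $\sum_{i\in I}v_i^2\|P_{V_i}(x)\|^2$ reappears and can be solved for. First, applying it in $\h$ to each summand and using the bound $\|S_{\mathcal{W}}^{-1}P_{W_i}(x)\|\le\|S_{\mathcal{W}}^{-1}\|\,\|P_{W_i}(x)\|$, I obtain
$$\|x\|^2\le\|S_{\mathcal{W}}^{-1}\|\sum_{i\in I}\big(w_i\|P_{W_i}(x)\|\big)\big(v_i\|P_{V_i}(x)\|\big).$$
Then, applying Cauchy--Schwarz to this $\ell^2$ pairing and invoking the upper frame bound $B$ of $\mathcal{W}$ through $\sum_{i\in I}w_i^2\|P_{W_i}(x)\|^2\le B\|x\|^2$, I arrive at
$$\|x\|^2\le\|S_{\mathcal{W}}^{-1}\|\sqrt{B}\,\|x\|\Big(\sum_{i\in I}v_i^2\|P_{V_i}(x)\|^2\Big)^{1/2}.$$
Dividing by $\|x\|$ (for $x\neq0$) and squaring gives exactly $\sum_{i\in I}v_i^2\|P_{V_i}(x)\|^2\ge\frac{1}{B\|S_{\mathcal{W}}^{-1}\|^2}\|x\|^2$, the claimed lower bound, so together with the Bessel bound $C$ this shows $\mathcal{V}$ is a fusion frame.

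The step I expect to be the most delicate is the bookkeeping in the double Cauchy--Schwarz: the weighted factors must be grouped as $(w_i\|P_{W_i}(x)\|)(v_i\|P_{V_i}(x)\|)$ so that the first family is controlled by the frame bound of $\mathcal{W}$ and the second is precisely the sum one wishes to bound below for $\mathcal{V}$. In the infinite-index setting I would also justify each interchange of summation with the inner product by the Bessel property of both families, which guarantees absolute convergence, and I would note that $S_{\mathcal{W}}^{-1}$ is a bounded, positive, invertible operator (with $\|S_{\mathcal{W}}^{-1}\|\le A^{-1}$ by $AI\le S_{\mathcal{W}}$), so that $\|S_{\mathcal{W}}^{-1}\|$ is finite and the resulting lower bound is strictly positive and hence genuine.
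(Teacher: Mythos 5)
Your proof is correct. The paper itself states this proposition without proof, simply citing G\v{a}vruta \cite{11}; your argument---pairing the reconstruction identity $x=\sum_{i\in I}v_iw_iP_{V_i}S_{\mathcal{W}}^{-1}P_{W_i}(x)$ with $x$, then applying Cauchy--Schwarz termwise and again in $\ell^2$ so that $\sum_{i\in I}w_i^2\|P_{W_i}(x)\|^2\le B\|x\|^2$ absorbs one factor and the sum $\sum_{i\in I}v_i^2\|P_{V_i}(x)\|^2$ survives to be solved for---is exactly the standard proof in that reference, and your handling of the convergence issues and of $\|S_{\mathcal{W}}^{-1}\|\le A^{-1}$ is sound.
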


Similar to the ordinary frames, it is easy to check  the following
 relation between redundancies of a fusion frame and its alternate dual.\par
 Let $\mathcal{W}=\{(W_i,1)\}_{i\in I}$ be  a fusion frame for $\h$
  with bounds $A,B$ and with the alternate
  dual ${\mathcal{V}}=\{({V}_i,1)\}_{i\in I}.$ Then
  $${{{1}}\over {\|{S}^{-1}_\mathcal{W}\|}^{2}}\leq {{\mathcal{R}_{\mathcal{V}}}\over
   {\mathcal{R}_{\mathcal{W}}}} \leq {{C}\over {A}},$$
  where $S_\mathcal{W}$ is the frame
operator for $\mathcal{W}$ and  $C$ is the upper fusion frame bound for $\mathcal{V}.$\\
\section{Excess of Fusion Frames}
 In an analogous  way to the frame theory, the concept of
 excess was introduced for fusion frames in \cite{12}.
We restate some results in the excesses of fusion frames and reprove them in an easier way. We draw
 similar  comparison about  redundancy of fusion frames.
\begin{defn}\cite{12}
Let $\mathcal{W}=\{(W_i,v_i)\}_{i\in I}$  be a fusion frame for $\h$ with synthesis operator ${T_\mathcal{W}}$.
The $excess$ of $\mathcal{W}$ is defined as $$e(\mathcal{W})=\dim N(T_\mathcal{W}),$$ where $N(T_\mathcal{W})=\ker [T_\mathcal{W}]$.
\end{defn}
\begin{defn}\cite{6}
Let $\{W_i\}_{i\in I}$ and $\{V_i\}_{i\in I}$ be fusion frames with
respect to the same family of weights.
We say that they are $unitary$ $equivalent$ if there exists a unitary
 operator $U$ on $\h$ such that $W_i=U(V_i)$.
\end{defn}
We have a similar statement for the family of admissible weights \cite{12}.

\begin{defn}\cite{12}
Let $\mathcal{W}=\{W_i\}_{i\in I}$ be a generating sequence  of closed subspaces of $\h,$  i.e.,
 ${\overline {\operatorname{span}} }\{W_i, i\in I\}=\h.$
 Let $\mathcal{P}(\mathcal{W})$ be  the set of weights $\{w_i\}_{i\in I}\in \ell^{\infty}_+(I)$
 such that $\mathcal{W}=\{(W_i,w_i)\}_{i\in I} $ be a fusion frame for $\h.$\\

$\mathcal{P}(\mathcal{W})$ is called $admissible$  weights for $\mathcal{W}$.
 Given $v,w\in \mathcal{P}(\mathcal{W})$, we say that $v$ and $w$ are
  $equivalent$ if there exists $\alpha>0$ such that
 $v=\alpha w$.
\end{defn}
\begin{prop}
Let $\mathcal{W}=\{(W_i,w_i)\}_{i\in I}$ and $\mathcal{V}=\{(V_i,v_i)\}_{i\in I}$
 be  unitary equivalent
fusion frames. Then $e(\mathcal{W})=e(\mathcal{V})$.
\end{prop}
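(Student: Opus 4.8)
The plan is to build a unitary operator between the two representation spaces that intertwines the synthesis operators, and then transport kernel dimensions across it. First I would observe that, by the definition of unitary equivalence, $\mathcal{W}$ and $\mathcal{V}$ share the same family of weights, so I may write $w_i=v_i$ throughout. Since $W_i=U(V_i)$ for a unitary $U$ on $\h$, the restriction $U|_{V_i}\colon V_i\to W_i$ is a bijective isometry, and I would use this to define a componentwise map
$$\tilde U\colon \Big(\sum_{i\in I}{\oplus V_i}\Big)_{\ell_2}\to \Big(\sum_{i\in I}{\oplus W_i}\Big)_{\ell_2},\qquad \tilde U(\{g_i\}_{i\in I})=\{U g_i\}_{i\in I}.$$
The next step is to check that $\tilde U$ is well defined and unitary: it is isometric because $\sum_i\|Ug_i\|^2=\sum_i\|g_i\|^2$, and it is onto because any $\{f_i\}_{i\in I}$ in $(\sum_{i\in I}{\oplus W_i})_{\ell_2}$ is the image of $\{U^{*}f_i\}_{i\in I}$, noting that $U^{*}f_i\in V_i$ whenever $f_i\in W_i=U(V_i)$.

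The key computation is the intertwining relation. For any $\{g_i\}_{i\in I}\in(\sum_{i\in I}{\oplus V_i})_{\ell_2}$ I would compute
$$T_{\mathcal{W}}\tilde U(\{g_i\}_{i\in I})=\sum_{i\in I} w_i U g_i = U\sum_{i\in I} v_i g_i = U\,T_{\mathcal{V}}(\{g_i\}_{i\in I}),$$
so that $T_{\mathcal{V}}=U^{-1}T_{\mathcal{W}}\tilde U$. From this I read off the kernels: since $U^{-1}$ is injective, $N(T_{\mathcal{V}})=\ker(T_{\mathcal{W}}\tilde U)$, and since $\tilde U$ is a bijection, $\ker(T_{\mathcal{W}}\tilde U)=\tilde U^{-1}\big(N(T_{\mathcal{W}})\big)$. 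Because $\tilde U$ is unitary it carries $N(T_{\mathcal{W}})$ isometrically onto $N(T_{\mathcal{V}})$, whence $\dim N(T_{\mathcal{V}})=\dim N(T_{\mathcal{W}})$, i.e. $e(\mathcal{V})=e(\mathcal{W})$.

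The only delicate point, and where I expect the bookkeeping to matter, is that $T_{\mathcal{W}}$ and $T_{\mathcal{V}}$ do not act on the same Hilbert space, so one cannot simply conjugate by $U$ on $\h$. I must first construct the componentwise unitary $\tilde U$ between the two distinct representation spaces and verify that it maps the $V_i$-coordinate into the $W_i$-coordinate, which is exactly where $W_i=U(V_i)$ is used. Once $\tilde U$ and the relation $T_{\mathcal{V}}=U^{-1}T_{\mathcal{W}}\tilde U$ are in hand, the conclusion is a routine transport of kernel dimension along a unitary.
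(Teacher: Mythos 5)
Your proof is correct and follows essentially the same route as the paper: both arguments transport the kernel of one synthesis operator to the other via the componentwise action of the unitary $U$ and conclude that the kernel dimensions agree. The only difference is one of rigor — the paper writes $N(T_{\mathcal{V}})=U(N(T_{\mathcal{W}}))$, silently identifying $U$ with its componentwise lift between the two representation spaces, whereas you make that lift $\tilde U$ and the intertwining relation $T_{\mathcal{V}}=U^{-1}T_{\mathcal{W}}\tilde U$ explicit, which is exactly the bookkeeping the paper glosses over.
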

\begin{proof}
Assume that $\mathcal{V}=U\mathcal{W}$ for some unitary operator $U$ on $\h$ and let
$T_\mathcal{W}$ and $T_{\mathcal{V}}$ be the synthesis operators of $\mathcal{W}$ and $\mathcal{V}$, respectively. We should
prove that
$$\dim N(T_\mathcal{W})= \dim N(T_{\mathcal{V}}).$$
Since ${V}_i=UW_i=\big \{ \{g_i\}_{i\in I}: g_i=Uf_i, f_i\in W_i\big \}$,  the synthesis operator $\mathcal{V}$ will be
$$T_{\mathcal{V}}:(\sum_{i\in I}{\oplus UW_i})_{\ell_2} \to \h$$
  $$  T_{\mathcal{V}}(g)= \sum_{i\in I}{v_i g_i},\quad  g=\{g_i\}_{i\in I}\in (\sum_{i\in I}{\oplus UW_i})_{\ell_2}.$$
   Now we have
  \begin{eqnarray*}
 N(T_{\mathcal{V}})&=& \big \{\{g_i\}_{i\in I}: T_{\mathcal{V}}
  (\{g_i\})=0 \big \}\\
  &=&\big \{\{g_i\}_{i\in I}: \sum_{i\in I} v_ig_i=0\big \}\\
 &=&\big \{\{Uf_i\}_{i\in I}: \sum_{i\in I} v_iUf_i=0\big \}\\
 &=&U\big \{\{f_i\}_{i\in I}:  U\sum_{i\in I} v_if_i=0\big \}\\
  &=&U\big \{\{f_i\}_{i\in I}:  \sum_{i\in I} v_if_i=0\big \}\\
  &=&U(N(T_\mathcal{W})).
  \end{eqnarray*}
  Since
 $U$ is unitary,  we have
  $$\dim N(T_{\mathcal{V}})=\dim U(N(T_{\mathcal{W}}))=\dim N(T_{\mathcal{W}}),$$
  therefore
   $$e(\mathcal{W})=e(\mathcal{V}).$$
\end{proof}
For equivalent admissible weights, we have the following proposition.
\begin{prop}
Let $\mathcal{W}=\{(W_i,v_i)\}_{i\in I}$ be a fusion frame
 and $\{{w_i\}_{i\in I}}\in {\mathcal{P}(\mathcal{W})}$ be equivalent to $\{v_i\}_{i\in I}$.
Then $\mathcal{V}=\{(W_i,w_i)\}_{i\in I}$ is a fusion frame and $e(\mathcal{W})=e(\mathcal{V}).$
\end{prop}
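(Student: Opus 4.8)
The plan is to reduce everything to the observation that the two synthesis operators differ only by a positive scalar factor, so that they have identical null spaces. First I would unwind the definition of equivalent admissible weights: since $\{w_i\}_{i\in I}$ and $\{v_i\}_{i\in I}$ are equivalent elements of $\mathcal{P}(\mathcal{W})$, there is a constant $\alpha>0$ with $v_i=\alpha w_i$ for every $i\in I$ (equivalently $w_i=\alpha^{-1}v_i$). That $\mathcal{V}=\{(W_i,w_i)\}_{i\in I}$ is again a fusion frame is immediate from the assumption $\{w_i\}_{i\in I}\in\mathcal{P}(\mathcal{W})$, which by definition means precisely that these weights turn $\{W_i\}_{i\in I}$ into a fusion frame; alternatively it can be seen directly, since replacing each weight $v_i$ by $\alpha^{-1}v_i$ merely multiplies both fusion frame bounds by $\alpha^{-2}$.

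The key step is to exploit that the two synthesis operators are defined on the \emph{same} domain. Because $\mathcal{W}$ and $\mathcal{V}$ share the same underlying subspaces $\{W_i\}_{i\in I}$, both $T_{\mathcal{W}}$ and $T_{\mathcal{V}}$ act on $(\sum_{i\in I}{\oplus W_i})_{\ell_2}$. Hence for every $\{f_i\}_{i\in I}$ in this space I can compute directly from the definition of the synthesis operator
\[
T_{\mathcal{W}}(\{f_i\}_{i\in I})=\sum_{i\in I}v_if_i=\alpha\sum_{i\in I}w_if_i=\alpha\,T_{\mathcal{V}}(\{f_i\}_{i\in I}),
\]
so that $T_{\mathcal{W}}=\alpha\,T_{\mathcal{V}}$ as operators.

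From here the conclusion is immediate: since $\alpha>0$ is nonzero, scaling an operator by $\alpha$ does not change its null space, so $N(T_{\mathcal{W}})=N(T_{\mathcal{V}})$, and therefore
\[
e(\mathcal{W})=\dim N(T_{\mathcal{W}})=\dim N(T_{\mathcal{V}})=e(\mathcal{V}).
\]
There is no serious obstacle in this argument; the only point that requires care is the observation that the two synthesis operators have literally the same domain, which forces the proportionality to be an honest operator identity rather than just a pointwise relation between two formulas. This holds precisely because the subspaces are left unchanged and only the weights are rescaled, which is exactly the content of the equivalence hypothesis.
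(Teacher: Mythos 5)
Your proof is correct and follows the same route as the paper: both arguments reduce the claim to showing $N(T_{\mathcal{W}})=N(T_{\mathcal{V}})$, which the paper asserts as "straightforward" and you justify explicitly via the operator identity $T_{\mathcal{W}}=\alpha\,T_{\mathcal{V}}$ on the common domain $(\sum_{i\in I}{\oplus W_i})_{\ell_2}$. Your write-up is in fact more complete than the paper's one-line proof, but it is the same idea.
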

\begin{proof}
Similar to the proof of the above proposition,  it is straightforward that
$N(T_\mathcal{V})=N(T_\mathcal{W}),$ so $e(\mathcal{W})=e(\mathcal{V}).$
\end{proof}
As in the case of ordinary frames, we have the following result for Riesz decomposition of $\h$.
\begin{cor}
Let $\mathcal{W}=\{(W_i,v_i)\}_{i\in I}$ be a Riesz decomposition of
$\h$.
Then the excess of $\mathcal{W}$ is equal to zero.
\end{cor}
\begin{proof}
Since $\mathcal{W}=\{(W_i,v_i)\}_{i\in I}$ is a Riesz decomposition of
$\h$, therefore by Proposition \ref{bbb}, the synthesis operator $T_{\mathcal{W}}$ is one to one. Hence $$e(\mathcal{W})=\dim N({T_{\mathcal{W}}})=0.$$
\end{proof}

Similar comparisons hold between redundancies of fusion frames $\mathcal{W}$ and
$U\mathcal{W}$ for a unitary operator $U$ on $\h,$ see Proposition \ref{3.3} part [D4].\\
For equivalent weights we have the following proposition that its proof is obvious.
\begin{prop}
Let $\mathcal{W}=\{(W_i,v_i)\}_{i\in I}$ be a fusion frame for $\h$ and
let
$\mathcal{V}=\{(W_i,\alpha v_i)\}_{i\in I}$, for some $\alpha>0.$
Then $\mathcal{V}$ is a fusion frame and the redundancies of $\mathcal{W}$ and $\mathcal{V}$
 satisfy
$$\mathcal{R}_\mathcal{W}=\mathcal{R}_\mathcal{V}.$$
\end{prop}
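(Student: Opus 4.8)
The plan is to separate the statement into its two assertions --- that $\mathcal{V}$ is again a fusion frame, and that the redundancy functions agree --- and to dispatch each directly from the relevant definitions. The essential observation, which makes the whole proposition nearly immediate, is that scaling the weights by a common positive constant affects the fusion frame \emph{inequality} but leaves the \emph{subspaces} (and hence the redundancy function) untouched.

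First I would establish that $\mathcal{V}$ is a fusion frame. Since $\mathcal{W}=\{(W_i,v_i)\}_{i\in I}$ is a fusion frame, there are bounds $0<A\leq B<\infty$ with
$$A\|x\|^2\leq\sum_{i\in I}v_i^2\|P_{W_i}(x)\|^2\leq B\|x\|^2\quad\forall x\in\h.$$
Because the subspaces of $\mathcal{V}$ coincide with those of $\mathcal{W}$ and its weights are $\alpha v_i$, I would note that $(\alpha v_i)^2=\alpha^2 v_i^2$, so multiplying the displayed chain of inequalities through by $\alpha^2$ gives
$$\alpha^2 A\|x\|^2\leq\sum_{i\in I}(\alpha v_i)^2\|P_{W_i}(x)\|^2\leq \alpha^2 B\|x\|^2\quad\forall x\in\h.$$
Thus $\mathcal{V}$ is a fusion frame for $\h$ with bounds $\alpha^2 A$ and $\alpha^2 B$; in particular these are finite and strictly positive since $\alpha>0$.

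For the equality of redundancies, the key point is that the redundancy function is, by its very definition, independent of the weights: for each $x\in\SSS$ it is $\mathcal{R}_{\mathcal{W}}(x)=\sum_{i\in I}\|P_{W_i}(x)\|^2$, a quantity built only from the orthogonal projections onto the subspaces $W_i$. Since $\mathcal{W}$ and $\mathcal{V}$ share exactly the same family $\{W_i\}_{i\in I}$ of subspaces, the summands agree term by term, so $\mathcal{R}_{\mathcal{V}}(x)=\sum_{i\in I}\|P_{W_i}(x)\|^2=\mathcal{R}_{\mathcal{W}}(x)$ for every $x\in\SSS$, whence $\mathcal{R}_{\mathcal{W}}=\mathcal{R}_{\mathcal{V}}$ as functions on the unit sphere.

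There is essentially no obstacle here: the proof is routine because the weights enter only through the frame inequality, not through the redundancy function. The one point worth flagging is the infinite-index case, where the framework of Section~3.2 presupposes $\mathcal{R}_{\mathcal{W}}^+<\infty$; since the two redundancy functions are literally identical, this finiteness hypothesis transfers verbatim to $\mathcal{V}$, so the comparison is well posed in that setting as well. This is precisely why the paper labels the proof obvious.
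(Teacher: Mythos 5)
Your proof is correct and fills in exactly the argument the paper leaves implicit when it declares this proposition's proof obvious: the weights scale the frame inequality by $\alpha^{2}$, while the redundancy function $\mathcal{R}(x)=\sum_{i\in I}\|P_{W_i}(x)\|^{2}$ never sees the weights at all. Nothing is missing, and your remark about the finiteness hypothesis $\mathcal{R}_{\mathcal{W}}^{+}<\infty$ transferring to $\mathcal{V}$ in the infinite case is a sensible extra check.
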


\section{Redundancy of Fusion Frame Systems}
For a given fusion frame, one can build local frames for each
 subspace and putting them together to get global frames. Therefore the connection between the redundancies of local frames
 and the original fusion frame shall be interesting. In this section,
  we will  study this connection in some special cases.
\begin{defn}\cite{8}
Let $\mathcal{W}=\{(W_i,v_i)\}_{i\in I}$ be a fusion frame for $\h$
and let $\{f_{ij}\}_{j\in J_i}$
 be a frame for $W_i$  for each $i\in I$. Then we call $\{(W_i,v_i,
 \{f_{ij}\}_{j\in J_i})\}_{i\in I}$
 a $fusion$ $frame$ $system$ for $\h$. The frame vectors $\{f_{ij}\}_{j\in J_i}$
  are called local frame vectors.
 \end{defn}
 The next proposition shows the connection between the redundancies of the local frames
 and  fusion frame in the fusion frame system  $\mathcal{W}=\{(W_i,v_i,
 \{f_{ij}\}_{j\in J_i})\}_{i\in I},$ in a special case.
 \begin{prop}

Let $\mathcal{W}=\{(W_i,v_i)\}_{i=1}^{N}$ be a fusion frame for $\h^{n}$
and let $\phi_i=\{f_{ij}\}_{j=1}^{m_i}$
 be a finite frame for $W_i$, for $i=1,...,N$.   Assume that each frame  $\phi_i$ has orthogonal elements, for $i=1,...,N.$

If $\mathcal{R}_{\phi_i}(x)$ and  $\mathcal{R}_\mathcal{W}(x)$
denote the redundancy functions of frames
  $\phi_i$ for $i=1,...,N$ and the fusion frame $\mathcal{W} $ respectively,
  then
  $$\mathcal{R}_\mathcal{W}(x)=\sum_{i=1}^{N} \mathcal{R}_{\phi_i(x)}\quad for \quad all \quad x\in \SSS.$$
 \end{prop}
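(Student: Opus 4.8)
The plan is to reduce the claimed identity to a pointwise statement inside each subspace $W_i$ and then invoke Parseval's identity there. The decisive observation is that the orthogonality hypothesis upgrades each local frame to an orthogonal basis: a family of pairwise orthogonal nonzero vectors is automatically linearly independent, and since $\phi_i=\{f_{ij}\}_{j=1}^{m_i}$ already spans $W_i$ (being a frame for it), it must in fact be an orthogonal basis of $W_i$, so that $m_i=\dim W_i$. This is what allows the full strength of Parseval rather than a one-sided frame inequality.

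First I would normalize, setting $e_{ij}=f_{ij}/\|f_{ij}\|$, so that $\{e_{ij}\}_{j=1}^{m_i}$ is an orthonormal basis for $W_i$. Then the orthogonal projection onto $W_i$ has the standard expansion $P_{W_i}(x)=\sum_{j=1}^{m_i}\langle x,e_{ij}\rangle e_{ij}$, and Parseval's identity within the subspace $W_i$ gives $\|P_{W_i}(x)\|^2=\sum_{j=1}^{m_i}|\langle x,e_{ij}\rangle|^2$ for every $x\in\SSS$.

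Next I would identify each summand with the rank-one projection appearing in the redundancy function of $\phi_i$. Since $\langle f_{ij}\rangle=\langle e_{ij}\rangle$, the orthogonal projection onto this line is $P_{\langle f_{ij}\rangle}(x)=\langle x,e_{ij}\rangle e_{ij}$, whence $\|P_{\langle f_{ij}\rangle}(x)\|^2=|\langle x,e_{ij}\rangle|^2$. Combining this with the previous step yields $\|P_{W_i}(x)\|^2=\sum_{j=1}^{m_i}\|P_{\langle f_{ij}\rangle}(x)\|^2=\mathcal{R}_{\phi_i}(x)$. Summing over $i=1,\dots,N$ and recalling that $\mathcal{R}_\mathcal{W}(x)=\sum_{i=1}^{N}\|P_{W_i}(x)\|^2$ then produces the asserted equality $\mathcal{R}_\mathcal{W}(x)=\sum_{i=1}^{N}\mathcal{R}_{\phi_i}(x)$.

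The calculation is essentially routine once orthogonality is exploited, so I do not expect a genuine obstacle; the only point demanding care is the first step, namely recognizing that the orthogonality assumption converts the local frame into an honest orthogonal basis, so that $\|P_{W_i}(x)\|^2$ splits exactly as a sum of squared one-dimensional projections. Without this hypothesis the quantity $\|P_{W_i}(x)\|^2$ would instead be governed by the local frame operator of $\phi_i$ and the clean additive identity would no longer hold; this explains why the orthogonality condition is imposed in the statement.
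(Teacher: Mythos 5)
Your proof is correct and follows essentially the same route as the paper: both arguments use the orthogonality hypothesis to expand $P_{W_i}(x)$ as a sum of the rank-one projections $P_{\langle f_{ij}\rangle}(x)$ and then apply the Pythagorean identity to split $\|P_{W_i}(x)\|^2$ into $\sum_{j=1}^{m_i}\|P_{\langle f_{ij}\rangle}(x)\|^2=\mathcal{R}_{\phi_i}(x)$ before summing over $i$. The only difference is cosmetic: you normalize to an orthonormal basis $e_{ij}=f_{ij}/\|f_{ij}\|$ first, while the paper keeps the unnormalized vectors and carries the coefficients $\langle x,f_{ij}\rangle/\|f_{ij}\|^{2}$ through the computation.
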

 \begin{proof}
Let $\phi_i=\{f_{ij}\}_{j=1}^{m_i}$ be a finite frame for $W_i,$ so $W_i={\operatorname{span}}\{f_{i1},...,f_{im_{i}}\}$
 for $i=1,...,N$.
 Therefore the orthogonal projections $P_{W_i}$ are
 $$P_{W_i}(x)=c_1f_{i1}+...+c_{m_{i}}f_{im_{i}},\quad i=1,...,N$$
 where, $c_j=$ ${\langle x,f_{ij}\rangle}\over
 {{\|f_{ij}\|}^{2}}$ $, j=1,...,m_i.$
 Without loss of generality,  suppose that $\|f_{ij}
 \|\neq 0$ for $i=1,...,N$, $j=1,...,m_i.$
 By the definition of the redundancy function for finite
 frames, we have
 $$\mathcal{R}_{\phi_i}(x)=\sum_{j=1}^{m_i}{\|P_{\langle f_{ij}\rangle}(x)\|}^{2}=\sum_{j=1}^{m_i}{\|
 {{\langle x,f_{ij}\rangle}\over {{\|f_{ij}\|}^{2}}}} {f_{ij}}\|^{2}.$$
 The redundancy function for the fusion frame
 $\mathcal{W}=\{(W_i,v_i)\}_{i=1}^{N}$ is
 $$\mathcal{R}_\mathcal{W}(x)=\sum_{i=1}^{N}\|P_{W_i}(x)\|^2=
 \sum_{i=1}^{N}\|c_1f_{i1}+...+c_{m_{i}}f_{im_{i}}\|^2
$$$$=\sum_{i=1}^{N}\sum_{j=1}^{m_i}{\|
 {{\langle x,f_{ij}\rangle}\over {{\|f_{ij}\|}^{2}}}}
  {f_{ij}}\|^{2}
=\sum_{i=1}^{N}\mathcal{R}_{\phi_i}(x).$$
 \end{proof}

Parseval frames play an important role in abstract frame theory,
since they are extremely useful for applications.
For Parseval fusion frames we have the following characterization \cite{6}.\\
\begin{lem}\cite{6}
For each $i\in I,$ let $v_i>0$ and let $\{f_{ij}\}_{j\in{J_i}}$ be a
Parseval frame sequence in $\h.$
Define $W_i=\overline{\operatorname{span}}\{f_{ij}\}_{j\in{J_i}}$ for all $i\in I$
and choose for each subspace $W_i$
an orthonormal basis $\{e_{ij}\}_{j\in{J_i}}.$
Then the following conditions are equivalent.
\begin{enumerate}
\item $\{v_if_{ij}\}_{i\in I,j\in{J_i}}$ is a Parseval frame for $\h$;
\item $\{v_ie_{ij}\}_{i\in I,j\in{J_i}}$ is a Parseval frame for $\h$;
\item $\{(W_i,v_i)\}_{i\in I}$ is a Parseval fusion frame for $\h.$
\end{enumerate}

\end{lem}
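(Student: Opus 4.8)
The plan is to reduce all three conditions to a single scalar quantity, namely $\sum_{i\in I}v_i^2\|P_{W_i}(x)\|^2$, by showing that the frame expressions in (1) and (2) both collapse to this fusion-frame expression pointwise on $\h$. Once that is done, each of the three conditions says exactly that this common quantity equals $\|x\|^2$ for all $x$, so their equivalence is immediate.

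First I would establish the key pointwise identity: for the Parseval frame sequence $\{f_{ij}\}_{j\in J_i}$ for $W_i$ and any $x\in\h$,
$$\sum_{j\in J_i}|\langle x,f_{ij}\rangle|^2=\|P_{W_i}(x)\|^2.$$
The point is that each $f_{ij}$ lies in $W_i$, so $\langle x,f_{ij}\rangle=\langle P_{W_i}(x),f_{ij}\rangle$ because $x-P_{W_i}(x)\perp W_i\ni f_{ij}$. Since $P_{W_i}(x)\in W_i$ and $\{f_{ij}\}_{j\in J_i}$ is a Parseval frame \emph{for} $W_i$, the Parseval identity inside $W_i$ gives $\sum_{j\in J_i}|\langle P_{W_i}(x),f_{ij}\rangle|^2=\|P_{W_i}(x)\|^2$. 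I would then record the analogous identity for the orthonormal basis: since $\{e_{ij}\}_{j\in J_i}$ is an orthonormal basis for $W_i$, the same projection argument together with Parseval's identity applied to $P_{W_i}(x)\in W_i$ yields $\sum_{j\in J_i}|\langle x,e_{ij}\rangle|^2=\|P_{W_i}(x)\|^2$.

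Next, summing over $i$ against the weights $v_i^2$, which is legitimate term by term since every summand is nonnegative and the iterated sums may therefore be freely reordered, I obtain, for every $x\in\h$,
$$\sum_{i\in I}v_i^2\sum_{j\in J_i}|\langle x,f_{ij}\rangle|^2=\sum_{i\in I}v_i^2\|P_{W_i}(x)\|^2=\sum_{i\in I}v_i^2\sum_{j\in J_i}|\langle x,e_{ij}\rangle|^2.$$
Condition (1) asserts that the left-hand expression equals $\|x\|^2$ for all $x$, condition (3) that the middle one does, and condition (2) that the right-hand one does; since the three expressions are identically equal on $\h$, the three conditions are equivalent.

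The main obstacle is really only careful bookkeeping with the frame-sequence hypothesis: one must remember that $\{f_{ij}\}_{j\in J_i}$ is Parseval for its closed span $W_i$ and not a priori for all of $\h$, so the projection step producing the pointwise identity is essential and cannot be bypassed. Convergence of the double series in the infinite case is harmless, as all terms are nonnegative and each inner sum already converges to $\|P_{W_i}(x)\|^2$.
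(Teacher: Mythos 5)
Your proof is correct. The paper itself gives no proof of this lemma---it is imported verbatim from the cited Casazza--Kutyniok paper \cite{6}---but your argument is essentially the standard one used there: the pointwise identity $\sum_{j\in J_i}|\langle x,f_{ij}\rangle|^2=\|P_{W_i}(x)\|^2$ (valid because $x-P_{W_i}(x)\perp W_i$ and $\{f_{ij}\}_{j\in J_i}$ is Parseval for $W_i$), applied equally to the orthonormal bases $\{e_{ij}\}_{j\in J_i}$, collapses all three conditions to the single statement $\sum_{i\in I}v_i^2\|P_{W_i}(x)\|^2=\|x\|^2$ for all $x\in\h$. Your handling of the one subtle point---that the frame sequence is Parseval only for its closed span, so the projection step is genuinely needed---and of the rearrangement of the nonnegative double series is exactly right.
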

Putting the above lemma, Proposition \ref{111} and Proposition \ref{3.3} together,
we get the following corollary immediately.
\begin{cor}

Let $\{f_{ij}\}_{j\in{J_i}}$ be a
Parseval frame sequence in $\h.$
Define $W_i=\overline{\operatorname{span}}\{f_{ij}\}_{j\in{J_i}}$ for all $i\in I.$
Then the following conditions are equivalent:
\begin{enumerate}
\item $\{f_{ij}\}_{i\in I,j\in{J_i}}$ is a Parseval frame for $\h.$\\
\item $\{(W_i,1)\}_{i\in I}$ is a fusion frame for $\h$ with redundancy equal to $1$.
\end{enumerate}
\end{cor}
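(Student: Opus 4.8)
The plan is to establish the equivalence by chaining together the three results the corollary cites, using the redundancy characterization of Proposition \ref{111} as the pivot. The statement asserts that for subspaces $W_i=\overline{\operatorname{span}}\{f_{ij}\}_{j\in J_i}$ arising from a Parseval frame sequence, the family $\{f_{ij}\}_{i\in I,j\in J_i}$ is a Parseval frame for $\h$ if and only if $\{(W_i,1)\}_{i\in I}$ is a fusion frame for $\h$ with redundancy equal to $1$. I would first observe that the redundancy condition ``$\mathcal{R}^-_{\mathcal{W}}=\mathcal{R}^+_{\mathcal{W}}=1$'' is precisely the hypothesis of condition [D2] (with all weights normalized), and that by Proposition \ref{111} a $1$-uniform Parseval fusion frame is the same thing as an orthonormal fusion basis; so the second bullet is equivalent to saying $\{(W_i,1)\}_{i\in I}$ is a Parseval fusion frame.

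The forward direction would go as follows. First I would take the first bullet, that $\{f_{ij}\}_{i\in I,j\in J_i}$ is a Parseval frame for $\h$. This is exactly condition (1) of the cited Parseval-fusion-frame Lemma, so by that lemma it is equivalent to condition (3), namely that $\{(W_i,v_i)\}_{i\in I}$ is a Parseval fusion frame, with the weights $v_i$ set to $1$. Having a Parseval fusion frame means $A=B=1$, so its normalized version is $1$-tight; invoking condition [D1] of Proposition \ref{3.3} gives $\mathcal{R}^-_{\mathcal{W}}=\mathcal{R}^+_{\mathcal{W}}=1$, which is the redundancy statement in the second bullet. The converse direction simply reverses this chain: starting from redundancy equal to $1$, condition [D1] yields that the normalized version is $1$-tight, i.e.\ a Parseval fusion frame, and then condition (3)$\Rightarrow$(1) of the cited Lemma returns that $\{f_{ij}\}_{i\in I,j\in J_i}$ is a Parseval frame for $\h$.

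The main point I would want to be careful about is the bookkeeping of the weights and the normalization, rather than any deep obstacle. The cited Lemma is stated for general $v_i>0$, while the corollary fixes $v_i=1$; I need to note that taking $v_i=1$ specializes ``Parseval fusion frame'' to ``$1$-uniform Parseval fusion frame,'' which is exactly the hypothesis that lets Proposition \ref{111} and condition [D2] engage. The only genuine content is verifying that ``redundancy equal to $1$'' in the normalized sense coincides with the Parseval (tight with bound $1$) condition for the normalized family, and this is immediate from [D1]. Thus the proof is essentially a short citation argument, and I would write it as a direct chain of equivalences: first bullet $\Leftrightarrow$ Parseval fusion frame (by the Lemma) $\Leftrightarrow$ normalized version $1$-tight (by [D1]) $\Leftrightarrow$ second bullet.
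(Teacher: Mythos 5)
Your proposal is correct and follows exactly the route the paper intends: the paper gives no detailed proof, stating only that the corollary follows immediately by combining the Parseval fusion frame lemma, Proposition \ref{111}, and Proposition \ref{3.3}, and your chain (first bullet $\Leftrightarrow$ Parseval fusion frame with $v_i=1$ by the lemma, $\Leftrightarrow$ redundancy $1$ by [D1]/[D2]) is precisely that argument spelled out. Your care with the weight normalization is the right detail to make explicit, since with $v_i=1$ the fusion frame coincides with its normalized version and [D1] applies directly.
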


\section{Examples}
Now we analyze the fusion frames $\mathcal{W}$ and  $\mathcal{V}$ introduced in Section 1.
We will show that
the lower and  upper redundancies are precisely equal to those values which we expected.

\begin{exam}
 The family  $\mathcal{W}=\{W_1,...,W_1,W_2,...,W_n\}$ where $W_1$ occurs $n+1$ times
 is a fusion frame with respect to $v_i=1$ for all $i=1,...,n.$ The fusion frame
 $\mathcal{W}$ satisfies
$$\mathcal{R}_{\mathcal{W}}^-=1 \quad and \quad \mathcal{R}_{\mathcal{W}}^+=n+1.$$
This can be seen as follows. Let $x\in \SSS.$ By the definition of the redundancy function for fusion frames and condition [D3] from Proposition 3.3,
$$\mathcal{R}_{\mathcal{W}}(x)=n\|P_{W_1}(x)\|^2+\sum_{i=1}^{n} \|P_{W_i}(x)\|^2$$
$$=n{\|\langle x,e_1\rangle\|}^{2}+1 \leq 1+n.$$
Now, let $x=e_2$. We have
$$\mathcal{R}_{\mathcal{W}}(e_2)=1\leq \mathcal{R}_{\mathcal{W}}(x), \quad
 \forall x\neq e_2$$
which implies that $\mathcal{R}_{\mathcal{W}}^-=1$. Exploiting [D1]
and [D2] from Proposition 3.3, the fusion frame $\mathcal{W}$ is neither orthonormal fusion basis
nor tight.

The fusion frame $\mathcal{V}=\{W_1,W_1,W_2,W_2,...,W_n,W_n\}$
from Section 1, possesses a uniform redundancy.
More precisely,
$$\mathcal{R}_{\mathcal{V}}^-=\mathcal{R}_{\mathcal{V}}^+=2.$$
This follows from
$$\mathcal{R}_{\mathcal{V}}(x)=2\sum_{i=1}^{n} \|P_{W_i}(x)\|^2=2,$$
in which the last equality follows from the fact that $\{(W_i,v_i)\}_{i=1}^{n}$ is
 an 1-Parseval fusion frame. Hence, $\mathcal{R}_{\mathcal{V}}$ takes its
  minimum and   maximum over the unit sphere $\SSS$.
\\Note that $\mathcal{V}$ is a $2$-tight fusion frame therefore by part [D1] from Proposition 3.3,
 the uniform redundancy coincides with the customary notion of redundancy
  as  the following quotient:
 $$\sum_{i=1}^{2n}{ v^2_i \dim W_i \over {n}
}={2n\over n}=2.$$
\end{exam}
\begin{exam}
Let $\mathcal{W}=\{W_1,...,W_n\},$ where $W_i$ comes from example 7.1, for $i=1,...,n.$
Then it is clear that $\mathcal{W}$ is an orthonormal fusion basis for $\h.$
 Hence $\mathcal{R}_{\mathcal{W}}=1.$ It is obvious that  $\mathcal{W}$
 is not robust against any erasures.
 \end{exam}
In the next  example, $\mathcal{R}_{\mathcal{W}}=1$ but $\mathcal{W}$ is robust against $1$-erasure of each subspace.
So, the robustness of a fusion frame, depends on choosing the subspaces and weights.
\begin{exam}
Let $\{e_i\}_{i=1}^{5}$ be an orthonormal basis for
 $\CC^{5}.$ Consider  $W_1:=\operatorname{span}\{e_1, e_2,e_3\},$
 $W_2 :=\operatorname{span}\{e_2,e_3,e_4\},$ $W_3:=\operatorname{span}\{e_4,e_5\}$ and let  $W_4:=\operatorname{span}\{e_1,e_5\}$  with weights
$v_1=v_3={\sqrt{2\over 3}},$ $v_2$ $=v_4$ $=$ ${2\sqrt{3}}\over 3$. Then $\mathcal {W}=\{W_i,v_i\}_{i=1}^{4}$
is a $2$-tight fusion frame for $\CC^{5}.$
By Proposition \ref{8}, two subspaces from $\mathcal {W}$ can be deleted yet leaving a fusion
 frame. Since, if in Proposition \ref{8},
 cosider $J=\{1,3\}$, so $a={4\over 3}<2=A.$

 $\mathcal {W}=\{(W_i,v_i)\}_{i=1}^{4}$ is a 2-tight fusion frame
  for $\CC^{5}$, and   $\mathcal {W}$ has uniform  redundancy,
$$ \mathcal{R}_{\mathcal{W}}^-= \mathcal{R}_{\mathcal{W}}^+=2.$$
\indent In this example, our definition of redundancy  coincides with
 the traditional concept of redundancy,  i.e.,  $\mathcal {W}$ is robust
  against  $2$-erasures.
\end{exam}

\textbf{Acknowledgments}
The Authors would like to thank the reviewers for their valuable comments and suggestions to improve the manuscript.

\bibliographystyle{amsplain}

%

\end{document}